%
%
%

\documentclass[graybox]{svmult}


\usepackage{helvet}         
\usepackage{courier}        
\usepackage{type1cm}        
%
\usepackage{makeidx}         
\usepackage{graphicx}        
\usepackage{multicol}        
\usepackage[bottom]{footmisc}

\usepackage{amsmath}
\usepackage{amssymb}
\usepackage{mathrsfs}
\usepackage{amsfonts}\usepackage{amsxtra}\usepackage{color}


\makeindex             



\DeclareMathOperator{\G}{\mathcal{G}}

\DeclareMathOperator{\Hom}{Hom}

\DeclareMathOperator{\FFP}{FFP}

\DeclareMathOperator{\Null}{null}

\DeclareMathOperator{\trace}{trace}

\DeclareMathOperator*{\spann}{span}\DeclareMathOperator{\tr}{\trace}

\DeclareMathOperator{\Pol}{Pol}

\DeclareMathOperator{\Tr}{Tr}
\DeclareMathOperator{\sym}{sym}

\newcommand{\K}{\mathcal{K}}

\newcommand{\R}{\mathbb{R}}\newcommand{\N}{\mathbb{N}}

\begin{document}

\title*{Cubatures on Grassmannians: moments, dimension reduction, and related topics}
 \titlerunning{Cubatures on Grassmannians}
\author{Anna Breger \and Martin Ehler \and Manuel Gr\"af \and Thomas Peter}
\authorrunning{Cubatures in Grassmannians} 
\institute{A.~Breger \at University of Vienna, Department of Mathematics, Oskar-Morgenstern-Platz 1, A-1090 Vienna, 
\email{anna.breger@univie.ac.at}
\and M.~Ehler \at University of Vienna, Department of Mathematics, Oskar-Morgenstern-Platz 1, A-1090 Vienna, 
\email{matin.ehler@univie.ac.at}
\and M.~Gr\"af \at Acoustics Research Institute, Wohllebengasse 12-14, A-1040 Vienna, 
\email{mgraef@kfs.oeaw.ac.at}
\and T.~Peter \at University of Vienna, Department of Mathematics, Oskar-Morgenstern-Platz 1, A-1090 Vienna, 
\email{petert@uni-osnabrueck.de}
}
%
%
\maketitle


\section{Introduction}\label{sec:1}
Function approximation, integration, and inverse problems are just few examples of numerical  fields that rely on efficient strategies for function sampling. As particular sampling rules, the concepts of cubatures in Euclidean space and the sphere have been widely investigated to integrate polynomials by a finite sum of sampling values, cf.~\cite{Delsarte:1977aa,Engels:1980uq,Hoggar:1982fk,Konig:1999fk,Neumaier:1988kl}. To some extent, cubatures are universal sampling strategies in the sense that they are highly efficient in many fields, and in the context of function approximation, covering, and integration they have proved superior to the widely used random sampling \cite{Brauchart:fk,Reznikov:2015zr}. 

Recently, cubatures on compact manifolds have attracted attention, cf.~\cite{Brandolini:2014oz,Filbir:2010aa,Pesenson:2012fp}. Integration, covering, and polynomial approximation from cubatures on manifolds and homogeneous spaces have been extensively studied from a theoretical point of view, cf.~\cite{deBoor:1993aa,DeVore:1993ab,Geller:2011fk,Maggioni:2008fk,Reznikov:2015zr} and references therein. Orthogonality is a leading concept in many mathematical fields, and dimension reduction is intrinsically tied together with low dimensional projections. The Grassmannian manifold is the space of orthogonal projectors of fixed rank, and in this chapter we like to explore on the concept of cubatures in Grassmannians. Therefore, we shall provide a brief overview of  recent results on Grassmannian cubatures.  

Our starting point in Section \ref{sec:2} is the problem of reconstructing a sparse (i.e.~finitely supported) probability measure $\mu$ in $\R^d$ from its first few moments.  Sparse distributions are indeed uniquely determined by their first few moments, and Prony's method has recently been adapted to this reconstruction, \cite{P95,KPRO16}. According to the Johnson Lindenstrauss Lemma, low dimensional projections of $\mu$ still capture essential information, \cite{Dasgupta:2003fk}. 
Taking the first few moments of low dimensional projections only, we now aim to reconstruct the first few moments of $\mu$ but we allow for general probability distributions in Section \ref{sec:3}, not necessarily sparse ones, cf.~\cite{Graf:2014qd}. A new construction of suitable projections are provided in Theorem \ref{th:con Q}. It turns out that the choice of projectors is closely linked to cubatures in Grassmannians, i.e., the set of low dimensional projectors should form a cubature, see Section \ref{sec:4} and, in particular,  Theorem \ref{th:zwei}. Hence, the reconstruction of high dimensional moments from lower dimensional ones is naturally related to the concept of Grassmannian cubatures. Therefore, we then discuss in Section \ref{subsec:5.1} numerical constructions of cubatures in Grassmannians by minimizing the worst case integration error of polynomials, cf.~\cite{Breger:2016vn,Bachoc:2010aa}. In Section \ref{sec:5.2}, we go beyond polynomials and briefly discuss sequences of low cardinality cubatures that yield optimal worst case integration error rates for Bessel potential functions, cf.~\cite{Brandolini:2014oz}, see also \cite{Breger:2016vn}. The optimal integration errors of cubatures directly induce schemes for function approximation from samples by replacing  the inner products of $L_2$ orthonormal basis expansions with cubature approximations, see Section \ref{sec:5.3}. Intuitively, good samplings for function approximation should well cover the underlying space, and, in Section \ref{Sec:5.4}, we recapitulate that sequences of low cardinality Grassmannian cubatures are asymptotically optimal coverings, cf.~\cite{Breger:2016rc}. To further reflect on the versatility, we also provide some results on phase retrieval problems, in which Grassmannian cubatures are used, see Section \ref{sec:5.5} for details. 

So far, we have outlined the use of Grassmannian cubatures for various topics in numerical mathematics. 
Unions of Grassmannians would offer additional flexibility since the rank of the projector is not needed to be fixed a-priori. Projectors with varying ranks may indeed have benefits in practice, see \cite{Harandi:2013wo,Veeraraghavan:2008aa} for potential applications. Therefore, the concept of cubatures on unions of Grassmannians is discussed in Section \ref{sec:6}. The number of required cubature points is mainly steered by the dimension of the underlying polynomial space. Addressing necessary prerequisites for the aforementioned topics within unions of Grassmannians, i.e., approximation of integrals and functions, moment reconstruction, covering, and phase retrieval, we shall determine the dimensions of polynomial spaces on unions of Grassmannians, cf.~\cite{Ehler:2014zl}. For special cases, we provide elementary proofs. The general cases need deeper analysis, for which we refer to \cite{Ehler:2014zl}.

\section{Reconstruction from moments and dimension reduction}\label{sec:2}
\subsection{Reconstructing sparse distributions from moments}
Our starting point is a high dimensional random vector $X\in\R^d$ with finite support $\{x_i\}_{i=1}^m\subset \R^d$, i.e., $X$ is distributed according to a discrete probability measure $\mu$ on $\R^d$ with support $\{x_i\}_{i=1}^m$ and positive weights $\{a_i\}_{i=1}^m$ satisfying $\sum_{i=1}^m a_i = 1$, so that
\begin{equation*}
\mu = \sum_{i=1}^m a_i \delta_{x_i},
\end{equation*}
where $\delta_{x_i}$ denotes the point measure at $x_i$. 
We now aim to reconstruct $\mu$ from knowledge of the moments 
\begin{equation}\label{eq:mom def}
m_\mu(\lambda):=\mathbb{E} X^\lambda = \sum_{i=1}^m a_i x_i^\lambda ,\quad \lambda\in\Lambda, 
\end{equation}
where $\Lambda \subset \N^d$ is some fixed subset. The nonlinear inverse problem of reconstructing $\mu$ means to identify its support $\{x_i\}_{i=1}^m$ and its weights $\{a_i\}_{i=1}^m$. The core idea of Prony's method is to determine an ideal $\mathcal{I}$ of polynomials on $\R^d$ just from the moments  $m_\mu(\lambda)$, $\lambda\in\Lambda$, through a system of linear equations, such that its zero locus 
\begin{equation*}
\mathcal{V}(\mathcal{I}) = \{x\in\R^d : f(x)=0, \;\forall f\in\mathcal{I}\}
\end{equation*}
is exactly the point set $\{x_i\}_{i=1}^m$. The one dimensional case, expressed in terms of difference equations, was introduced in \cite{P95}, the multivariate case is treated in \cite{KPRO16}.

Once $\mathcal{I}$ is determined, its zero locus $\mathcal{V}(\mathcal{I})=\{x_i\}_{i=1}^m$ can be determined by standard methods \cite{B13}, and the weights $\{a_i\}_{i=1}^m$ are computed by a system of linear equations from the Vandermonde system \eqref{eq:mom def}. 

More specifically, the zero locus $\mathcal{V}(\mathcal{I}_i)$ of each ideal
\begin{equation*}
\mathcal{I}_i := \big( (z-x_i)^\alpha : \alpha\in\N^d,\; |\alpha|=1 \big) 
\end{equation*}
is $\mathcal{V}(\mathcal{I}_i)=\{x_i\}$, for $i=1,\ldots,m$, so that 
$
\{x_i\}_{i=1}^m = \mathcal{V}(\mathcal{I})$ with $\mathcal{I} := \mathcal{I}_1\cdots \mathcal{I}_m$. 
Note that $\mathcal{I}$ coincides with 
\begin{equation*}
\mathcal{I} = \Big (\prod_{i=1}^m (z-x_i)^{\alpha_i}: \alpha_i \in\N^d,\;|\alpha_i|=1,\; i=1,\ldots,m\Big),
\end{equation*}
so that we have $d^m$ many generators of the ideal that must now be determined from the moments $m_\mu(\lambda)$, $\lambda\in\Lambda$. 

To simplify, let us now suppose that $d=1$. In this case, the ideal $\mathcal{I}$ is generated by the single polynomial 
\begin{equation*}
p(z)=(z-x_1)\cdots (z-x_m)=\sum_{k=0}^{m} p_k z^k
\end{equation*}
of degree $m$. Its coefficient sequence $\{p_k\}_{k=0}^m$ satisfies 
\begin{equation}\label{eq:prony}
\sum_{k=0}^m p_k m_\mu(k+\lambda)  =  \sum_{i=1}^m x_i^\lambda a_i \sum_{k=0}^m p_k  x_i^k  =  \sum_{i=1}^m x_i^\lambda a_i p(x_i) =0.
\end{equation}
Equation \eqref{eq:prony} holds for arbitrary values of $\lambda$. Thus, varying $\lambda$ und using that $p_m = 1$ leads to the linear system of equations
\begin{equation}\label{eq:solv}
\sum_{k=0}^{m-1} p_k m_\mu(k+\lambda) = -m_\mu(m+\lambda), \quad \lambda\in\Lambda',
\end{equation}
where $\Lambda'\subset\Lambda$ such that $k+\Lambda'\in\Lambda$, for all $k=0,\ldots,m$. We now attempt to solve \eqref{eq:solv} for $p_0,\ldots,p_{m-1}$. Obviously, $\Lambda$ must be sufficiently large, so that 
\begin{equation}\label{eq:rank matrix}
H := \big( m_\mu(k+\lambda)  \big)_{\substack{\lambda\in\Lambda' \\ k=0,\ldots,m-1 }} \in\R^{|\Lambda'| \times {m}}
\end{equation}
can have full rank $m$. From knowledge of $p$, the eigenvalues of its companion matrix yield its zeros $\{x_i\}_{i=1}^m$. Having determined $\{x_i\}_{i=1}^m$, \eqref{eq:mom def} yields a Vandermonde system of linear equations to compute the weights $\{a_i\}_{i=1}^m$. Note that the rank condition in \eqref{eq:rank matrix} is satisfied for $\Lambda=\{0,\ldots,2m-1\}$ and $\Lambda'=\{0,\ldots,m-1\}$, cf.~\cite{PP13} and \cite{PT14} for an overview of Prony methods.

The case $d>1$ is more involved, but can essentially be treated similarly. In \cite{KPRO16} it is shown that $\#\Lambda = \mathcal O(m^d)$ suffices to ensure reconstruction while $\#\Lambda = \mathcal O(md)$ suffices if $\{x_i\}_{i=1}^m$ are in general position.

Concerning numerical stability, 
one has to differentiate between the idea of Prony's method as presented here and stable numerical variants for implementation as for example ESPRIT \cite{RK86}, MUSIC \cite{S86}, and finite rate of innovation \cite{VMB02}. These algorithms perform excellent in applications. 
If $\lambda$ and $k$ are chosen as proposed in \eqref{eq:prony}, the system matrix \eqref{eq:rank matrix}
is a Hankel matrix that can be factored into 
\begin{equation*}
H = A^\top DA
\end{equation*}
 with a diagonal matrix $D = \mathrm{diag}(a_i)_{i = 1}^m$ and a Vandermonde matrix $A = (x_i^k)_{k=0,i=1}^{m-1,m}$. For $d>1$, a similar factorization holds, where $A$ is a generalized Vandermonde matrix.  
%
Due to the Vandermonde structure of $A$, its condition number tends to be large if the minimal separation distance $\nu_\mu$ is small or if there are large sphere deviations $\alpha_\mu$, i.e., 
 \begin{equation*}
 \nu_\mu:=\min_{i\neq j}\|x_i-x_j\|_2 \approx 0,\quad \text{or}\quad \alpha_\mu:=\max_{i\neq j}\big|\|x_i\|-\|x_j\|\big|\gg 0.
 \end{equation*}
This pinpoints stable performances when the measure $\mu$ has a well-separated support on a sphere with well behaved weights.

Note that the Prony method works beyond probability measures and can deal with $x_i \in \mathbb C^d$, $a_i \in \mathbb C$ and to this end also with $\lambda \in \mathbb Z^d$. Indeed, if $\Lambda'$ in \eqref{eq:prony} is chosen as $\Lambda'\subset -\mathbb N^d$, then the resulting system matrix becomes a Toeplitz matrix, which is preferred in some literature on Prony's method.

\subsection{Dimension reduction}
%
The idea of dimension reduction is that properties of interest of the high dimensional random vector $X\in\R^d$ may still be captured within its orthogonal $k<d$ dimensional projection, i.e., in $PX$, where $P$ is an element in the Grassmannian space
\begin{equation*}
\G_{k,d} := \{ P \in \R^{d\times d}_{\sym} \;:\; P^{2}=P ;\; \Tr(P)=k \}.
\end{equation*}
Here $\R^{d\times d}_{\sym}$ is the set of symmetric matrices in $\R^{d \times d}$.   
Consider now two sparsely distributed random vectors  
\begin{equation}\label{eq:XY distr}
X,Y\sim \sum_{i=1}^m a_i \delta_{x_i}. 
\end{equation}
Their difference $X-Y$ is distributed according to 
\begin{equation*}
X-Y \sim \sum_{i,j=1}^m a_ia_j \delta_{x_i-x_j}.
\end{equation*}
For $P\in\G_{k,d}$, the magnitude of the differences is distributed according to 
\begin{equation*}
\|PX-PY\|^2 \sim \sum_{i,j=1}^m a_ia_j \delta_{\|Px_i-Px_j\|^2}.
\end{equation*}
In fact, for $0<\epsilon<1$ and $k$ with $d\geq k\geq \frac{4\log(m)}{\epsilon^2/2-\epsilon^3/2}$, 
there is $P\in\G_{k,d}$, such that 
\begin{equation}\label{eq:JLL}
(1-\epsilon)\|X-Y\|^2\leq \frac{d}{k}\| P X - PY \|^2 \leq (1+\epsilon)\|X - Y\|^2
\end{equation}
holds with probability $1$. This is the direct consequence of realizations of the Johnson-Lindenstrauss Lemma applied to the deterministic point set $\{x_i\}_{i=1}^m$, cf.~\cite{Dasgupta:2003fk}. 

Note that \eqref{eq:JLL} tells us that the dimension reduction still preserves essential information of $X$ and $Y$.  At this point though, we just know of its existence, and we have not yet specified any particular projector $P$ such that \eqref{eq:JLL} holds, see \cite{Dasgupta:2003fk,Achlioptas:2003wo,Matousek:2008al} for different types of random choices. 

We should point out that $PX$ and $PY$ are contained in a $k$ dimensional subspace of $\R^d$, but still have $d$ entries as vectors in $d$ dimensions. The actual dimension reduction takes place by applying $Q\in \mathcal{V}_{k,d}$ with $Q^\top Q = P$, where 
\begin{equation*}
\mathcal{V}_{k,d}:=\{ Q\in\R^{k\times d}_{\sym} : QQ^\top = I_k\}
\end{equation*}
denotes the Stiefel manifold. The inequality \eqref{eq:JLL} becomes
\begin{equation*}
(1-\epsilon)\|X-Y\|^2\leq \frac{d}{k}\| Q X - Q Y \|^2 \leq (1+\epsilon)\|X - Y\|^2,
\end{equation*}
where $QX, QY\in\R^k$ are properly dimension reduced random vectors still containing the information of the pairwise differences up to a factor $1\pm \epsilon$. 

\section{High dimensional moments from lower dimensional ones}\label{sec:3}
We shall now combine dimension reduction with a modified problem, which is related to the reconstruction from moments. First, we drop the sparsity conditions and allow for arbitrary probability measures $\mu$ on $\R^d$.  Let $X\in \R^d$ be some random vector with unknown Borel probability distribution on $\R^d$. Suppose we do not have access to its moments, but we observe the first few moments of order $T$ of low-dimensional linear projections, i.e., for $\{Q_j\}_{j=1}^n\subset \mathcal{V}_{k,d}$, we measure 
\begin{equation}\label{eq:2}
\mathbb{E} (Q_jX)^s , \quad s\in\N^k,\; |s|\leq T,
\end{equation}
We cannot reconstruct $\mu$ directly, but we aim to determine the first few high-dimensional moments
\begin{equation}\label{eq:1}
\mathbb{E}X^r, \quad r\in\N^d,\; |r|\leq T. 
\end{equation}
In other words, we know the first few moments of order $T$ of the dimension reduced random vectors $Q_jX\in\R^{k}$, $j=1,\ldots,n$ and our task is to reconstruct the high-dimensional moments, cf.~\cite{Graf:2014qd}. 
The idea is to interpret 
moments as algebraic polynomials and represent desired high degree polynomials as products of polynomials of lower degree. The remainder of this chapter is dedicated to establish precise relations. 

Polynomials of total degree $T$ on $\R^d$, denoted by $\Pol_T(\R^d)$, are decomposed by 
\begin{equation*}
\Pol_T(\R^d)= \bigoplus_{t=0}^T \Hom_t(\R^d),
\end{equation*}
where $\Hom_t(\R^d)$ denotes the space of homogeneous polynomials of degree $t$ on $\R^d$. 
Let $x\in\R^d$ be a vector of unknowns, then $(Q_jx)^s$ is a homogenous polynomial of degree $|s|$. If 
\begin{equation}\label{eq:generater}
\{(Q_jx)^{s} : j=1,\ldots,n,\; s\in\N^{k},\; |s|= t\}
\end{equation}
spans $\Hom_t(\R^d)$, then each monomial of order $t$ is a linear combination of elements in \eqref{eq:generater}, so that the linearity of the expectation yields that all high-dimensional moments of order $t$ can be reconstructed from the low-dimensional moments 
\begin{equation*}
\mathbb{E}(Q_jX)^{s}, \quad j=1,\ldots,n, \quad |s|= t.
\end{equation*}
Thus, we aim to find $\{Q_j\}_{j=1}^n$, such that, for each $t\leq T$, \eqref{eq:generater} spans $\Hom_t(\R^d)$. Note that spanning sets in finite dimensions are also called frames. 

The most excessive dimension reduction corresponds to $k=1$. In this case, we observe that we only need to address $t=T$:
\begin{proposition}\label{pro:con Q}
Let $\{Q_j\}_{j=1}^n\subset \mathcal{V}_{1,d}$ and $x\in\R^d$ be a vector of unknowns. 
\begin{itemize}
\item[a)] 
If $\{( Q_j x)^t\}_{j=1}^n$ is a frame for $\Hom_t(\R^d)$, then $\{( Q_j x)^{t-1}\}_{j=1}^n$ is a frame for $\Hom_{t-1}(\R^d)$.
\item[b)] If $\{( Q_j x)^{t-1}\}_{j=1}^n$ is linearly independent in $\Hom_{t-1}(\R^d)$, then $\{( Q_j x)^{t}\}_{j=1}^n$ is linearly independent in $\Hom_{t}(\R^d)$.
\end{itemize}
\end{proposition}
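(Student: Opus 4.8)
\noindent\emph{Plan of proof.} Since $Q_j\in\mathcal V_{1,d}$ is a unit row vector, I write $Q_jx=\langle q_j,x\rangle$ with $q_j\in\R^d$, $\|q_j\|_2=1$, so in particular $q_j\neq 0$; throughout, $x$ is the vector of unknowns, so $(Q_jx)^t\in\Hom_t(\R^d)$. Everything will follow from one elementary intertwining identity: for any direction $v\in\R^d$,
\begin{equation*}
\partial_v\big((Q_jx)^t\big)=t\,\langle q_j,v\rangle\,(Q_jx)^{t-1},
\end{equation*}
together with the fact that $\partial_1\colon \Hom_t(\R^d)\to\Hom_{t-1}(\R^d)$ is surjective, since $\partial_1\big(x^{\alpha+e_1}\big)=(\alpha_1+1)\,x^\alpha$ already realizes every monomial of degree $t-1$. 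Thus a single differentiation maps the family $\{(Q_jx)^t\}_{j=1}^n$ onto scalar multiples of $\{(Q_jx)^{t-1}\}_{j=1}^n$, while mapping $\Hom_t(\R^d)$ onto $\Hom_{t-1}(\R^d)$.

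For part a) the plan is to push the spanning hypothesis through $\partial_1$. If $\{(Q_jx)^t\}_{j=1}^n$ spans $\Hom_t(\R^d)$, then applying the surjection $\partial_1$ and using linearity gives
\begin{equation*}
\Hom_{t-1}(\R^d)=\partial_1\big(\Hom_t(\R^d)\big)=\spann\big\{\partial_1\big((Q_jx)^t\big):j=1,\dots,n\big\}=\spann\big\{t\,(q_j)_1\,(Q_jx)^{t-1}:j=1,\dots,n\big\},
\end{equation*}
and the right-hand side is contained in $\spann\{(Q_jx)^{t-1}:j=1,\dots,n\}$. Hence $\{(Q_jx)^{t-1}\}_{j=1}^n$ spans $\Hom_{t-1}(\R^d)$, i.e.\ is a frame. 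The only point to watch is that some coefficients $(q_j)_1$ may vanish, which is harmless because we only need this inclusion of spans, not a count of the surviving generators.

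For part b) the plan is to run the intertwining in reverse. Starting from a linear relation $\sum_{j=1}^n c_j (Q_jx)^t=0$ and applying $\partial_v$ gives $t\sum_{j=1}^n c_j\langle q_j,v\rangle(Q_jx)^{t-1}=0$ for every $v\in\R^d$; linear independence of $\{(Q_jx)^{t-1}\}_{j=1}^n$ then forces $c_j\langle q_j,v\rangle=0$ for all $j$ and all $v$, and the choice $v=q_j$ with $\|q_j\|_2=1$ yields $c_j=0$. Therefore $\{(Q_jx)^t\}_{j=1}^n$ is linearly independent.

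I do not anticipate a genuine obstacle: the whole content is that differentiation transports spanning downward in degree and linear independence upward in degree within the family of powers of linear forms. The two minor subtleties, both flagged above, are that $\partial_1$ may annihilate individual generators in part a) (irrelevant for an inclusion of spans) and that one must invoke $q_j\neq 0$ to close part b); a reader preferring coordinates to directions in b) may instead apply each $\partial_i$ and use that some entry of $q_j$ is nonzero.
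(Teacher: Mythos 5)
Your proposal is correct and follows essentially the same route as the paper: part a) lifts through the surjection $\partial_1\colon\Hom_t(\R^d)\to\Hom_{t-1}(\R^d)$ and differentiates the frame expansion, and part b) applies the partial derivatives to a vanishing linear combination and uses $q_j\neq 0$ to conclude $c_j=0$. The only cosmetic difference is that you phrase a) as an inclusion of images of spans and b) with an arbitrary direction $v$ rather than the coordinate directions $e_i$, which changes nothing of substance.
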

\begin{proof}
a) Let $f$ be an arbitrary element in $\Hom_{t-1}(\R^d)$. There is $g\in \Hom_{t}(\R^d)$ such that its first partial derivative $\partial_1 g$ coincides with $f$. Since $\{( Q_j x)^t\}_{j=1}^n$ is a frame for $\Hom_t(\R^d)$, there are coefficients $\{c_j\}_{j=1}^n$ such that $g=\sum_{j=1}^n  c_j (Q_jx)^t$. Therefore, we obtain
\begin{equation*}
f(x) = \sum_{j=1}^n  c_j  (Q_je_1)  t(Q_jx)^{t-1},
\end{equation*}
which verifies part a).

b) Suppose that $0 = \sum_{j=1}^n  c_j (Q_jx)^{t}$. Applying all partial derivatives yield
\begin{equation*}
0 = \sum_{j=1}^n  c_j (Q_je_i) t(Q_jx)^{t-1},\qquad i=1,\ldots,d.
\end{equation*}
The linear independence assumption implies $c_j (Q_je_i)=0$, for $i=1,\ldots,d$, and, therefore, $c_j=0$, for $j=1,\ldots,n$, since $Q_j\neq 0$.
\end{proof}

Part a) of Proposition \ref{pro:con Q} tells us that if $\{( Q_j x)^t\}_{j=1}^n$ is a frame for $\Hom_t(\R^d)$, then 
\begin{equation}\label{eq:generater2}
\{(Q_jx)^{s} : j=1,\ldots,n,\; s\in\N^{k},\; |s|\leq t\}
\end{equation}
is a frame for $\Pol_t(\R^d)$. The proof directly shows that the first low-dimensional moments are sufficient to reconstruct the first high-dimensional moments. 


Next,  we provide a general construction recipe of $\{Q_j\}_{j=1}^n\subset \mathcal{V}_{1,d}$ that covers arbitrary $d$ and $t$. Note that the dimension of $\Hom_t(\R^d)$ is $\binom{t+d-1}{d-1}$:
\begin{theorem}\label{th:con Q}
Let $\{v_i\}_{i=1}^d$ be pairwise different positive real numbers, let $\{\alpha_j\}_{j=1}^{t+d-1}$ be pairwise different nonnegative integers, and let $V = (v_i^{\alpha_j})_{i,j}$ denote the associated $(t+d-1)\times d$-Vandermonde type matrix.  Suppose that the $\binom{t+d-1}{d-1}\times d$ matrix $Q$ is build from all minors of $V$ of order $d-1$. We denote the rows of $Q$ by $Q_1,\ldots,Q_n$, where $n=\binom{t+d-1}{d-1}$. Then $\{(Q_jx)^t\}_{j=1}^n$ is a basis for $\Hom_t(\R^d)$.
\end{theorem}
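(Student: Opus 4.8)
\emph{Plan.} Since $\dim\Hom_t(\R^d)=\binom{t+d-1}{d-1}=n$, it suffices to show that the $n$ forms $(Q_jx)^t$ are linearly independent, equivalently that they span $\Hom_t(\R^d)$. The first step I would take is a geometric reformulation. Put $N:=t+d-1$ and write $v^{(j)}:=(v_1^{\alpha_j},\dots,v_d^{\alpha_j})\in\R^d$ for the $j$-th row of $V$; choosing in $Q$ the cofactor signs, the row $Q_S$ attached to a $(d-1)$-element set $S\subseteq\{1,\dots,N\}$ satisfies $Q_S\cdot x=\det\big[\,x\,;\,(v^{(j)})_{j\in S}\,\big]$, the determinant of the $d\times d$ matrix whose first row is $x$ and whose remaining rows are the $v^{(j)}$ with $j\in S$. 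Thus $Q_S$ is, up to a non-zero scalar, the normal vector of $\spann\{v^{(j)}:j\in S\}$ (flipping the sign of a whole row of $Q$ only multiplies $(Q_jx)^t$ by $(-1)^t$, which is harmless). Since $v_1,\dots,v_d$ are distinct and positive, every generalized Vandermonde determinant $\det\big(v_i^{\alpha_{j_m}}\big)_{i,m=1}^{d}$ with the $j_m$ distinct is non-zero (the functions $s\mapsto s^{\alpha}$, $\alpha\ge0$, form a Chebyshev system on $(0,\infty)$; equivalently such a determinant factors as a Vandermonde determinant in the $v_i$ times a Schur polynomial of the positive reals $v_1,\dots,v_d$), hence any $d$ of the points $v^{(1)},\dots,v^{(N)}$ are linearly independent. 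Finally I would invoke the standard apolarity fact: for $n=\dim\Hom_t(\R^d)$ vectors $c_1,\dots,c_n\in\R^d$, the powers $(c_jx)^t$ form a basis of $\Hom_t(\R^d)$ if and only if the only $F\in\Hom_t(\R^d)$ with $F(c_j)=0$ for all $j$ is $F=0$.

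With these reductions the theorem becomes the claim $P(t,d)$: \emph{whenever $w^{(1)},\dots,w^{(N)}\in\R^d$ ($N=t+d-1$) are such that any $d$ of them are linearly independent, and $Q_S\in\R^d\setminus\{0\}$ denotes a normal vector of $\spann\{w^{(j)}:j\in S\}$ for each $(d-1)$-set $S$, then every $F\in\Hom_t(\R^d)$ vanishing at all the $Q_S$ is the zero form.} I would prove $P(t,d)$ by induction on $t+d$. In the base case $d=2$ (any $t$) the $w^{(j)}$ are pairwise non-proportional, so the $t+1$ vectors $Q_{\{j\}}$ give $t+1$ distinct points of the projective line, and a binary form of degree $t$ vanishing at $t+1$ points vanishes identically. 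In the base case $t=1$ (any $d$), pairing a relation $\sum_i c_iQ_{[d]\setminus\{i\}}=0$ with $w^{(j)}$ and using $\langle Q_{[d]\setminus\{j\}},w^{(j)}\rangle\neq0$ (which holds because the $w^{(j)}$ span $\R^d$) forces $c_j=0$, so the $Q_{[d]\setminus\{i\}}$ form a basis of $\R^d$ and only the zero linear form annihilates all of them.

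For the inductive step, suppose $F\in\Hom_t(\R^d)$ vanishes at every $Q_S$ and split the sets $S$ by whether they contain $N$. If $N\in S$ then $Q_S\perp w^{(N)}$, so all these $Q_S$ lie in the hyperplane $V_N:=(w^{(N)})^{\perp}\cong\R^{d-1}$; writing $\tilde w^{(j)}$ for the orthogonal projection of $w^{(j)}$ onto $V_N$ ($j<N$), one checks that $Q_{\{N\}\cup S'}$ is exactly a normal vector in $V_N$ of $\spann\{\tilde w^{(j)}:j\in S'\}$ and that any $d-1$ of $\tilde w^{(1)},\dots,\tilde w^{(N-1)}$ are linearly independent (because any $d-1$ of the $w^{(j)}$ together with $w^{(N)}$ are). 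Hence $F|_{V_N}\in\Hom_t(\R^{d-1})$ satisfies the hypotheses of $P(t,d-1)$ for the configuration $\tilde w^{(j)}$, so $F|_{V_N}=0$ and therefore $F=\ell_N\,G$ with $\ell_N(x)=\langle w^{(N)},x\rangle$ and $G\in\Hom_{t-1}(\R^d)$. Now for a $(d-1)$-set $S\subseteq\{1,\dots,N-1\}$ we have $\ell_N(Q_S)\neq0$, for otherwise $w^{(N)}\in\spann\{w^{(j)}:j\in S\}$ would contradict the linear independence of those $d$ vectors; from $0=F(Q_S)=\ell_N(Q_S)\,G(Q_S)$ we get $G(Q_S)=0$. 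Since $w^{(1)},\dots,w^{(N-1)}$ still has any $d$ linearly independent and $N-1=(t-1)+d-1$, the case $P(t-1,d)$ yields $G=0$, hence $F=0$.

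I expect the essential point to be the choice of this particular general statement: the hypothesis ``any $d$ of the vectors are linearly independent'' is preserved both under orthogonal projection onto $(w^{(N)})^{\perp}$ (which lowers $d$) and under passing to a subconfiguration (which lowers $t$), and that is precisely what makes the double induction close; proving the bare statement only for the exponential configuration $v^{(j)}$ would not be enough to run the recursion. The remaining ingredients are routine: the cofactor sign bookkeeping in the definition of $Q$, so that $Q_S$ genuinely annihilates $\spann\{v^{(j)}:j\in S\}$; the fact that a form vanishing on the hyperplane $V_N$ is divisible by its defining linear form $\ell_N$ (unique factorization over $\R$); and the non-vanishing of the generalized Vandermonde determinants, which is the one place where positivity of the $v_i$ is used.
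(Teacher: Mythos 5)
Your proof is correct, but it takes a genuinely different route from the paper's. The paper expands $(Q_jx)^t$ by the multinomial theorem, reduces the claim to the invertibility of the $n\times n$ matrix $M_2=\big(Q_j^\alpha\big)_{j,\alpha}$, and then cites a compound-matrix determinant identity of Ben Yaacov giving $\det(M_2)=c^{d-1}$, where $c$ is the product of all order-$d$ minors of $V$; the non-vanishing of these generalized Vandermonde minors (the same fact you isolate) finishes the argument in a few lines. You instead pass through the apolarity duality between power sums $(Q_jx)^t$ and point evaluation on $\Hom_t(\R^d)$, and prove by a double induction on $t+d$ the more general statement $P(t,d)$ that for \emph{any} $t+d-1$ vectors in general position the normal vectors of all $(d-1)$-subsets admit no nonzero degree-$t$ form vanishing on them; the Vandermonde structure enters only to certify general position. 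The trade-off is clear: the paper's proof is very short but outsources the real content to the cited identity, whereas yours is self-contained, exposes the geometry (restriction to the hyperplane $(w^{(N)})^\perp$ and division by its linear form), and yields a strictly stronger conclusion, at the cost of length and of the careful bookkeeping you correctly flag (cofactor signs, preservation of general position under projection). Both arguments are sound; yours could be cited as an elementary alternative that avoids the reference to the determinant identity.
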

\begin{proof}
We expand $(Q_jx)^t$ by the multivariate binomial formula 
\begin{equation*}
(Q_jx)^t = \sum_{\alpha\in\N^d,\;|\alpha|=t} \binom{t}{\alpha} Q_j^\alpha x^\alpha.
\end{equation*}
The coefficients are put into the $j$-th row of a matrix $M_1\in \R^{n\times n}$, i.e.,  
\begin{equation*}
M_1 = \Big( \left(\begin{smallmatrix} t\\ \alpha
\end{smallmatrix}\right)
Q_j^\alpha \Big)_{j,\alpha} .
\end{equation*}
We must now check that $M_1$ is invertible. 

Dividing each column $\alpha$ by its respective binomial coefficient $\binom{t}{\alpha}$ yields the matrix $M_2=\big(Q_j^\alpha \big)_{j,\alpha} \in \R^{n\times n}$, and $M_1$ is invertible if and only if $M_2$ is. Let $c$ denote the product of all minors of order $d$ of $V$. It follows from \cite{Yaacov:2014fr} that 
\begin{equation*}
\det(M_2) = c^{d-1}.
\end{equation*}
The Vandermonde structure yields that $c\neq 0$, so that $M_2$ and hence $M_1$ is invertible. Thus, $\{(Q_jx)^t\}_{j=1}^n$ is indeed a basis for $\Hom_t(\R^d)$.
\end{proof}
Note that normalization of the rows of $Q$ in Theorem \ref{th:con Q} yields $\{Q_j\}_{j=1}^n\subset \mathcal{V}_{1,d}$, and $\{(Q_jx)^t\}_{j=1}^n$ is a basis for $\Hom_t(\R^d)$. Thus, for each $s\leq t$, $\{(Q_jx)^s\}_{j=1}^n$ is a frame for $\Hom_s(\R^d)$ according to Proposition \ref{pro:con Q}.

\section{Frames vs.~cubatures and a moment reconstruction formula}\label{sec:4}
So far, we have seen that reconstruction of high dimension moments from low dimensional ones is related to frames for $\Hom_t(\R^d)$. Next, we shall relate such frames to cubature points.  Let $\Hom_t(\mathbb{S}^{d-1})$ denote the space of homogeneous polynomials $\Hom_t(\R^d)$, but restricted to the sphere $\mathbb{S}^{d-1}$. For points $\{q_j\}_{j=1}^n\subset \mathbb{S}^{d-1}$ and weights $\{\omega_j\}_{j=1}^n\subset \R$, we say that $\{(q_j,\omega_j)\}_{j=1}^n$ is a cubature for $\Hom_t(\mathbb{S}^{d-1})$ if 
\begin{equation*}
\int_{\mathbb{S}^{d-1}} f(x) \mathrm d x = \sum_{j=1}^n \omega_j f(q_j),\quad \text{for all } f\in \Hom_t(\mathbb{S}^{d-1}),
\end{equation*}
where $dx$ denotes the standard measure on the sphere normalized to have mass one. 
It turns out that the frame property of $\{(Q_jx)^s\}_{j=1}^n$ is related to the concept of cubature points. 
\begin{theorem}\label{th:zwei}
Let $\{Q_j\}_{j=1}^n\subset \mathcal{V}_{1,d}$ and $x\in\R^d$ be a vector of unknowns. 
\begin{itemize}
\item[a)] If $\{(Q_jx)^t\}_{j=1}^n$ is a frame for $\Hom_t(\R^d)$, then there are weights $\{\omega_j\}_{j=1}^n\subset\R$, such that $\{(Q^\top_j,\omega_j)\}_{j=1}^n$ is a cubature for $\Hom_t(\mathbb{S}^{d-1})$.

\item[b)] If there are weights $\{\omega_j\}_{j=1}^n\subset\R$ such that $\{(Q^\top_j,\omega_j)\}_{j=1}^n$ is a cubature for $\Hom_{2t}(\mathbb{S}^{d-1})$, then $\{(Q_jx)^t\}_{j=1}^n$ is a frame for $\Hom_t(\R^d)$.
\end{itemize}
\end{theorem}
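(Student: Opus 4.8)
The plan is to run both implications through the apolarity pairing on $\Hom_t(\R^d)$, that is, the symmetric bilinear form $\langle\cdot,\cdot\rangle_{\mathrm{ap}}$ determined by $\langle x^\alpha,x^\beta\rangle_{\mathrm{ap}}=\frac{\alpha!}{t!}\delta_{\alpha\beta}$ for $|\alpha|=|\beta|=t$. Writing $q_j:=Q_j^\top\in\mathbb{S}^{d-1}$ and expanding $(Q_jx)^t=\sum_{|\alpha|=t}\binom{t}{\alpha}q_j^\alpha x^\alpha$ by the multivariate binomial formula, one checks the reproducing identity
\begin{equation*}
\langle f,(Q_jx)^t\rangle_{\mathrm{ap}}=f(q_j),\qquad f\in\Hom_t(\R^d),\ j=1,\dots,n.
\end{equation*}
Since $\langle\cdot,\cdot\rangle_{\mathrm{ap}}$ is nondegenerate on the finite-dimensional space $\Hom_t(\R^d)$, this identity already yields the key equivalence: $\{(Q_jx)^t\}_{j=1}^n$ is a frame for $\Hom_t(\R^d)$ (i.e.\ spans it) precisely when no nonzero $f\in\Hom_t(\R^d)$ vanishes at all of $q_1,\dots,q_n$. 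I would also record that restriction to the sphere is a linear isomorphism $\Hom_t(\R^d)\to\Hom_t(\mathbb{S}^{d-1})$, because a degree-$t$ homogeneous polynomial vanishing on $\mathbb{S}^{d-1}$ vanishes everywhere via $f(x)=\|x\|^t f(x/\|x\|)$; hence the cubature condition for $\Hom_t(\mathbb{S}^{d-1})$ can be read as an identity of linear functionals on $\Hom_t(\R^d)$, and evaluations at the unit vectors $q_j$ agree in both pictures.

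For part a) I would take $g\in\Hom_t(\R^d)$ to be the apolar Riesz representative of the integration functional $f\mapsto\int_{\mathbb{S}^{d-1}}f\,dx$. Applying the reproducing identity pointwise in $y$ gives the explicit formula $g(x)=\int_{\mathbb{S}^{d-1}}\langle x,y\rangle^t\,dy$, where $\langle\cdot,\cdot\rangle$ is the Euclidean inner product; this is a bona fide element of $\Hom_t(\R^d)$ (an average over a compact set inside a finite-dimensional real space), and $\langle f,g\rangle_{\mathrm{ap}}=\int_{\mathbb{S}^{d-1}}f\,dx$ for all $f$. Because $\{(Q_jx)^t\}_{j=1}^n$ spans $\Hom_t(\R^d)$ by hypothesis, write $g=\sum_{j=1}^n\omega_j(Q_jx)^t$ with real $\omega_j$; pairing with an arbitrary $f$ and invoking the reproducing identity gives $\int_{\mathbb{S}^{d-1}}f\,dx=\sum_{j=1}^n\omega_j f(q_j)$. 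Passing through the restriction isomorphism, this says exactly that $\{(Q_j^\top,\omega_j)\}_{j=1}^n$ is a cubature for $\Hom_t(\mathbb{S}^{d-1})$.

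For part b) suppose $f\in\Hom_t(\R^d)$ satisfies $f(q_j)=0$ for every $j$. Then $f^2\in\Hom_{2t}(\R^d)$, so its restriction lies in $\Hom_{2t}(\mathbb{S}^{d-1})$, and the cubature hypothesis yields $\int_{\mathbb{S}^{d-1}}f^2\,dx=\sum_{j=1}^n\omega_j f(q_j)^2=0$. Since $f^2\ge 0$ on $\mathbb{S}^{d-1}$, this forces $f$ to vanish on the sphere, hence $f\equiv 0$ on $\R^d$ by homogeneity. By the equivalence from the first paragraph, $\{(Q_jx)^t\}_{j=1}^n$ spans $\Hom_t(\R^d)$, i.e.\ is a frame for it.

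The real content is the reproducing identity together with the resulting equivalence ``frame $\Leftrightarrow$ no common zero of $\Hom_t(\R^d)$ among the $q_j$''; once that is in place, part a) is a one-line Riesz-representation argument and part b) a one-line sum-of-squares/positivity argument, which is also where the jump from degree $t$ to degree $2t$ is needed. The only points requiring mild care are the binomial bookkeeping behind the reproducing identity and, in part a), the observations that $g$ is well defined and that the coefficients $\omega_j$ are real — both automatic because everything takes place in the finite-dimensional real vector space $\Hom_t(\R^d)$.
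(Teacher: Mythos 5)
Your proof is correct, and it takes a genuinely different route from the paper's. For part a), the paper expands each power $(a^\top x)^t$ in the frame with coefficients $c_j(a)$ chosen to depend continuously on $a$, and obtains the weights as $\omega_j=\int_{\mathbb{S}^{d-1}}c_j(a)\,\mathrm{d}a$; this implicitly relies on the continuity of a coefficient selection and on the fact that the powers of linear forms span $\Hom_t(\R^d)$ so that the resulting identity upgrades to a cubature for all of $\Hom_t(\mathbb{S}^{d-1})$. You instead represent the integration functional by its Riesz representative $g=\int_{\mathbb{S}^{d-1}}\langle\cdot,y\rangle^t\,\mathrm{d}y$ with respect to the apolarity pairing and read off the weights as expansion coefficients of $g$ in the spanning set; this sidesteps the continuity issue entirely and makes the degenerate case (odd $t$, where $g=0$ and all integrals vanish) transparent. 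For part b), the paper is constructive: it uses the $L_2$ reproducing kernel $K_t$ of $\Hom_t(\mathbb{S}^{d-1})$ and applies the cubature to the degree-$2t$ product $(z^\top x)^tK_t(z,a)$ to exhibit explicit coefficients $c_j(a)=\omega_jK_t(Q_j^\top,a)$ placing each $(a^\top x)^t$ in the span. Your argument is dual and non-constructive: the reproducing identity $\langle f,(Q_jx)^t\rangle_{\mathrm{ap}}=f(Q_j^\top)$ converts the frame property into the absence of a common zero of $\Hom_t(\R^d)$ at the points $Q_j^\top$, and the cubature applied to $f^2$ kills any such common zero by positivity. Both arguments locate the jump from $t$ to $2t$ in a product of two degree-$t$ polynomials, but yours is shorter and isolates a reusable equivalence (frame $\Leftrightarrow$ the $Q_j^\top$ form a determining set for $\Hom_t$), while the paper's yields an explicit dual system; note that the paper's more abstract Proposition \ref{prop:all weights} is in the same reproducing-kernel spirit as your pairing, just with the $L_2$ kernel in place of the apolar one.
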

\begin{proof}
a) Since $\{(Q_jx)^t\}_{j=1}^n$ is a frame for $\Hom_t(\R^d)$, for each $a\in\mathbb{S}^{d-1}$, there are coefficients $\{c_j(a)\}_{j=1}^n\subset\R$ such that 
\begin{equation*}
(a^\top x)^t = \sum_{j=1}^nc_j(a)(Q_jx)^t.
\end{equation*}
Note that the mapping $a\mapsto c_j(a)$ can be chosen to be continuous, for each $j=1,\ldots,n$. Therefore, we derive
 \begin{align*}
 \int_{\mathbb{S}^{d-1}} (a^\top x)^t  \mathrm da = \sum_{j=1}^n (Q_jx)^t\int_{\mathbb{S}^{d-1}} c_j(a) \mathrm da =  \sum_{j=1}^n (Q_jx)^t\omega_j ,
 \end{align*}
 with $\omega_j=\int_{\mathbb{S}^{d-1}} c_j(a)  \mathrm da$. Since the above equality holds for all $x\in\R^d$, 
 $\{(Q^\top_j,\omega_j)\}_{j=1}^n$ is a cubature for $\Hom_t(\mathbb{S}^{d-1})$.
 
 b) Note that $\Hom_t(\mathbb{S}^{d-1})$ is a reproducing kernel Hilbert space and let us denote its reproducing kernel with respect to the standard inner product by $K_t$. For now, we restrict $x$ to the sphere and let $a\in\mathbb{S}^{d-1}$ as well. The reproducing property yields 
 \begin{align*}
(a^\top x)^t &= \int_{\mathbb{S}^{d-1}} (z^\top x)^t K_t(z,a)\mathrm dz.   \\
\intertext{The mapping $z\mapsto (z^\top x)^t K_t(z,a)$ is contained in $\Hom_{2t}(\mathbb{S}^{d-1})$, so that the cubature property yields
}
(a^\top x)^t  &= \sum_{j=1}^n \omega_j(Q_jx)^tK_t(Q^\top_j,a)
  = \sum_{j=1}^n (Q_jx)^t c_j(a),
 \end{align*}
where $c_j(a)=\omega_jK_t(Q^\top_j,a)$. A homogeneity argument concludes the proof. 
 \end{proof}
Note that the degree of the homogeneous polynomials in Part b) of Theorem \ref{th:zwei} is not the same but $2t$ for the cubatures and $t$ for the frame. The degree $2t$ is due to multiplication of two homogeneous polynomials of degree $t$, which is not just an artifact of the proof.  There are indeed cubatures for $\Hom_{t}(\mathbb{S}^{d-1})$, whose cardinality is lower than the dimension of $\Hom_{t}(\R^d)$.


In fact, Theorem \ref{th:zwei} holds in much more generality in suitable finite dimensional reproducing kernel Hilbert spaces. Let $(\Omega,\sigma)$ be a finite measure space and let $\mathcal{F}$ be a linear subspace of continuous functions in $L_2(\Omega,\sigma)$. For points $\{q_j\}_{j=1}^n\subset \Omega$ and weights $\{\omega_j\}_{j=1}^n\subset \R$, we say that $\{(q_j,\omega_j)\}_{j=1}^n$ is a cubature for $\mathcal{F}$ if 
\begin{equation*}
\int_{\Omega} f(x) \mathrm d \sigma(x) = \sum_{j=1}^n \omega_j f(q_j),\quad \text{for all } f\in \mathcal{F}.
\end{equation*}  
The following result generalizes Theorem \ref{th:zwei}:
\begin{proposition}\label{prop:all weights}
Let $K:\Omega\times\Omega\rightarrow\R$ be a symmetric kernel that linearly generates $\mathcal{F}$, i.e., $K(x,y)=K(y,x)$, for $x,y\in\Omega$, and 
\begin{equation}\label{eq:span K and so}
\mathcal{F} = \spann\{K(a,\cdot) : a\in\Omega\}.
\end{equation}
For $\{q_j\}_{j=1}^n\subset \Omega$, the following holds:
\begin{itemize}
\item[a)] If $\{K(q_j,\cdot)\}_{j=1}^n$ is a frame for $\mathcal{F}$, then there are weights $\{\omega_j\}_{j=1}^n\subset\R$, such that $\{(q_j,\omega_j)\}_{j=1}^n$ is a cubature for $\mathcal{F}$.

\item[b)]
If there are weights $\{\omega_j\}_{j=1}^n\subset\R$ such that $\{(q_j,\omega_j)\}_{j=1}^n$ is a cubature for the linear span of $\mathcal{F} \cdot \mathcal{F}$, then $\{K(q_j,\cdot)\}_{j=1}^n$ is a frame for $\mathcal{F}$.
\end{itemize}
\end{proposition}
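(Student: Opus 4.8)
The plan is to mirror the argument of Theorem~\ref{th:zwei} in the abstract setting, replacing the monomial $(a^\top x)^t$ with the kernel section $K(a,\cdot)$ and the explicit sphere integral with the given measure $\sigma$. For part~a), I would start from the frame hypothesis: since $\{K(q_j,\cdot)\}_{j=1}^n$ spans the finite-dimensional space $\mathcal{F}$, and \eqref{eq:span K and so} says that every $K(a,\cdot)$ lies in $\mathcal{F}$, for each $a\in\Omega$ there are coefficients $c_j(a)$ with $K(a,\cdot)=\sum_{j=1}^n c_j(a)\,K(q_j,\cdot)$. As in the proof of Theorem~\ref{th:zwei}a), one can choose $a\mapsto c_j(a)$ measurable (indeed one can pick a fixed left inverse of the synthesis operator and set $c_j(a)$ to be the $j$-th coordinate of its action on $K(a,\cdot)$, which is a linear and hence continuous function of the finitely many Gram entries $K(a,q_i)$). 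Then I would integrate the identity in $a$ against $\sigma$: for any $x\in\Omega$,
\begin{equation*}
\int_\Omega K(a,x)\,\mathrm d\sigma(a) = \sum_{j=1}^n K(q_j,x)\int_\Omega c_j(a)\,\mathrm d\sigma(a) = \sum_{j=1}^n \omega_j\,K(q_j,x),
\end{equation*}
with $\omega_j:=\int_\Omega c_j(a)\,\mathrm d\sigma(a)$. The left-hand side equals $\int_\Omega f(a)\,\mathrm d\sigma(a)$ when $f=K(\cdot,x)=K(x,\cdot)$ by symmetry, and since such $f$ span $\mathcal{F}$ and both sides of the displayed identity are linear in $f$, the cubature property $\int_\Omega f\,\mathrm d\sigma = \sum_j \omega_j f(q_j)$ holds for all $f\in\mathcal{F}$.

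For part~b), the slight subtlety is that $\mathcal{F}$ need not itself be a reproducing kernel Hilbert space with reproducing kernel $K$; but because $K$ linearly generates the finite-dimensional $\mathcal{F}$, one can equip $\mathcal{F}$ with an inner product making it an RKHS whose reproducing kernel we call $\widetilde K$, and $\widetilde K(\cdot,a)\in\mathcal{F}$ for each $a$. (Alternatively, one works directly: choose any inner product on $\mathcal{F}$, let $\widetilde K$ be the resulting reproducing kernel, and note $\widetilde K(a,\cdot)\in\mathcal{F}=\spann\{K(q_j,\cdot)\}$ once the frame conclusion is in hand — but to avoid circularity it is cleanest to use the generic RKHS structure from the start.) For $f\in\mathcal{F}$, the reproducing property gives $f(a)=\int_\Omega f(z)\,\widetilde K(z,a)\,\mathrm d\sigma(z)$; the integrand $z\mapsto f(z)\widetilde K(z,a)$ is a product of two elements of $\mathcal{F}$, hence lies in the linear span of $\mathcal{F}\cdot\mathcal{F}$, so the cubature hypothesis applies and
\begin{equation*}
f(a) = \sum_{j=1}^n \omega_j\, f(q_j)\,\widetilde K(q_j,a).
\end{equation*}
Thus every $f\in\mathcal{F}$ is recovered from its samples $\{f(q_j)\}_{j=1}^n$, i.e. the sampling operator $f\mapsto (f(q_j))_j$ is injective on $\mathcal{F}$, which is exactly the statement that $\{\widetilde K(q_j,\cdot)\}_{j=1}^n$ spans $\mathcal{F}$, hence is a frame. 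Finally, since $\widetilde K$ and $K$ generate the same space and the Gram matrix $(K(q_i,q_j))$ has full rank iff $(\widetilde K(q_i,q_j))$ does — both being equivalent to injectivity of sampling on $\mathcal{F}$ — the frame conclusion transfers to $\{K(q_j,\cdot)\}_{j=1}^n$. A homogeneity/rescaling remark as in Theorem~\ref{th:zwei} is not needed here since we argue directly on $\Omega$.

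The main obstacle I anticipate is the bookkeeping in part~b) around which kernel to use: the hypothesis names the generating kernel $K$, but the reproducing-kernel trick needs a kernel adapted to an inner product on $\mathcal{F}$. The clean resolution is to observe that all of these statements — $\{K(q_j,\cdot)\}$ is a frame, $\{\widetilde K(q_j,\cdot)\}$ is a frame, the point evaluations $\{\delta_{q_j}\}$ separate points of the finite-dimensional $\mathcal{F}$ — are equivalent, so one is free to run the reproducing-kernel argument with $\widetilde K$ and then read off the conclusion for $K$. The only genuinely analytic point, the measurability of $a\mapsto c_j(a)$ in part~a), is handled exactly as in Theorem~\ref{th:zwei} by choosing a fixed bounded linear left inverse of the synthesis map and noting its dependence on $K(a,\cdot)$ is continuous in $a$ through the finitely many values $K(a,q_i)$.
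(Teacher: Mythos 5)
Your proof is correct and follows exactly the route the paper intends: the paper omits the proof of Proposition \ref{prop:all weights}, stating only that it is structurally the same as that of Theorem \ref{th:zwei}, and your argument is precisely that adaptation (integrate the frame expansion of $K(a,\cdot)$ over $a$ for part a); reproduce, apply the cubature to the product $f\cdot \widetilde K(\cdot,a)\in\mathcal{F}\cdot\mathcal{F}$, and conclude spanning for part b)). You also handle the one point the paper glosses over — that the generating kernel $K$ need not itself be the reproducing kernel of $\mathcal{F}$ — by running the argument with the reproducing kernel $\widetilde K$ and transferring the spanning conclusion back to $\{K(q_j,\cdot)\}_{j=1}^n$ via the (correct) equivalence with injectivity of the sampling map.
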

The proof of Proposition \ref{prop:all weights} is structurally the same as for Theorem \ref{th:zwei} with $K(x,y)=(x^\top y)^t$ and $\mathcal{F}=\Hom_t(\mathbb{S}^{d-1})$, so we omit the details. 
%
\begin{remark}
Part b) of Proposition \ref{prop:all weights} implies $n\geq \dim(\mathcal{F})$. Analoguous results in \cite{Harpe:2005fk}, for instance, are restricted to positive weights. 
\end{remark}

Note that $Q\in\mathcal{V}_{1,d}$ if and only if $Q^\top Q\in\mathcal{G}_{1,d}$. Moreover, the kernel 
\begin{equation*}
K_{t,1}:\mathcal{G}_{1,d}\times \mathcal{G}_{1,d}\rightarrow \R,\quad (P,R)\mapsto \trace(PR)^t
\end{equation*}
 linearly generates $\Hom_t(\G_{1,d})$. Here, the space of homogeneous polynomials of degree $t$ on $\G_{k,d}$ is defined by restrictions of homogeneous polynomials of degree $t$ on $\R^{d\times d}_{\sym}$. Moreover, $\G_{k,d}$ is naturally endowed with an orthogonal invariant probability measure $\sigma_{k,d}$. For $x\in\mathbb{S}^{d-1}$, 
\begin{equation}\label{eq:sphere = G}
(Q x)^{2t} = K_{t,1}(Q^\top Q , xx^\top),
\end{equation}
 so that $\Hom_{2t}(\mathbb{S}^{d-1})$ corresponds to $\Hom_t(\G_{1,d})$. According to \eqref{eq:sphere = G} we deduce that, for $\{Q_j\}_{j=1}^n\subset\mathcal{V}_{1,d}$, the set $\{(Q_jx)^{2t}\}_{j=1}^n$ is a frame for $\Hom_{2t}(\R^d)$ if and only if $\{K_{t,1}(Q_j^\top Q_j,\cdot)\}_{j=1}^n$ is a frame for $\Hom_t(\G_{1,d})$. Similarly, $\{(Q_j^\top,\omega_j)\}_{j=1}^n$ is a cubature for $\Hom_{2t}(\mathbb{S}^{d-1})$ if and only if $\{(Q_j^\top Q_j,\omega_j)\}_{j=1}^n$ is a cubature for $\Hom_{t}(\G_{1,d})$. Therefore, we can switch to the Grassmannian setting to formulate the following moment reconstruction result:
 \begin{corollary}[\cite{Graf:2014qd}]\label{cor:zweiter Versuch}
For $r\in\N^d$ with $|r|\leq t\leq d$, there are coefficients $a^r_s\in\R$, $s\in\N^d$, $|s|=|r|$, such that 
if $\{(P_j,\omega_j)\}_{j=1}^n$ is a cubature for $\Hom_{t}(\mathcal{G}_{1,d})$, then any random vector $X\in \R^d$ satisfies
\begin{equation}\label{eq:fundament 2}
\mathbb{E}X^r =\sum_{j=1}^n \omega_j \sum_{s\in\N^d,\;|s|= |r|}  a^r_s \mathbb{E} (P_jX)^s.
\end{equation}
\end{corollary}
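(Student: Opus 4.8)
The plan is to collapse \eqref{eq:fundament 2} into a single polynomial identity in $\R[x]$ and then to invoke the cubature hypothesis exactly once. Put $m:=|r|$. Every $P_j\in\mathcal{G}_{1,d}$ is a rank-one projector, so write $P_j=q_jq_j^\top$ with $q_j\in\mathbb{S}^{d-1}$; since $(P_jX)_i=(q_j)_i\,(q_j^\top X)$, a one-line computation gives $(P_jX)^s=q_j^s\,(q_j^\top X)^{m}$ for every $s\in\N^d$ with $|s|=m$, where $q_j^s:=\prod_i(q_j)_i^{s_i}$. Hence, writing $g(q):=\sum_{|s|=m}a^r_s\,q^s$, the inner sum on the right of \eqref{eq:fundament 2} equals $g(q_j)\,\mathbb{E}(q_j^\top X)^m$, and it suffices to exhibit coefficients $a^r_s$ — depending on $r$ and $d$ only, not on the cubature — so that for \emph{every} cubature $\{(P_j,\omega_j)\}_{j=1}^n$ for $\Hom_t(\mathcal{G}_{1,d})$ one has
\begin{equation}\label{eq:plan-poly-id}
x^r=\sum_{j=1}^n\omega_j\,g(q_j)\,(q_j^\top x)^{m}\quad\text{in }\R[x].
\end{equation}
Substituting $x=X$ and taking expectations (assuming the moments of $X$ of order $m$ exist) then produces \eqref{eq:fundament 2}.

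To deal with the right-hand side of \eqref{eq:plan-poly-id} I would feed it to the cubature. Fix $x$ and consider the map $P=qq^\top\mapsto g(q)\,(q^\top x)^m$: as a polynomial in $q$ it is homogeneous of degree $2m$ and invariant under $q\mapsto-q$, so by grouping its $2m$ linear factors into $m$ quadratic ones $q_iq_\ell=(qq^\top)_{i\ell}$ it is a polynomial of degree $m$ in the entries of $qq^\top$; thus it is the restriction to $\mathcal{G}_{1,d}$ of an element of $\Hom_m(\R^{d\times d}_{\sym})$, i.e.\ it lies in $\Hom_m(\mathcal{G}_{1,d})$. Since $\trace(P)=1$ on $\mathcal{G}_{1,d}$ and $t-m\ge 0$, multiplying by $(\trace P)^{t-m}$ shows $\Hom_m(\mathcal{G}_{1,d})\subseteq\Hom_t(\mathcal{G}_{1,d})$, so the cubature integrates this function exactly. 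Using that the push-forward of the uniform probability measure on $\mathbb{S}^{d-1}$ under $q\mapsto qq^\top$ is $\sigma_{1,d}$, this yields
\begin{equation*}
\sum_{j=1}^n\omega_j\,g(q_j)\,(q_j^\top x)^m=\int_{\mathcal{G}_{1,d}}g(q)\,(q^\top x)^m\,\mathrm d\sigma_{1,d}(P)=\int_{\mathbb{S}^{d-1}}g(q)\,(q^\top x)^m\,\mathrm dq .
\end{equation*}

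It then remains to choose $g$ so that this last integral equals $x^r$. Let $\Phi$ be the linear endomorphism of $\Hom_m(\R^d)$ given by $\Phi(h)(x):=\int_{\mathbb{S}^{d-1}}h(q)\,(q^\top x)^m\,\mathrm dq$, which is indeed homogeneous of degree $m$ in $x$. Since $\Hom_m(\R^d)$ is finite dimensional, it is enough to show that $\Phi$ is injective. If $\Phi(h)=0$, then $h$ is $L_2(\mathbb{S}^{d-1})$-orthogonal to every ridge power $q\mapsto(q^\top x)^m$, hence to their span, which is all of $\Hom_m(\R^d)$; in particular $h$ is orthogonal to itself over $\mathbb{S}^{d-1}$, so $h$ vanishes there and, being homogeneous, identically. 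Thus $\Phi$ is invertible; let $g$ be the unique element of $\Hom_m(\R^d)$ with $\Phi(g)(x)=x^r$ for all $x$, and take $a^r_s$ to be its monomial coefficients. These depend only on $m=|r|$ and $d$, and chaining the last three displays gives \eqref{eq:plan-poly-id} for every cubature, hence \eqref{eq:fundament 2}.

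The step that needs care is the degree bookkeeping in the middle paragraph: the whole point is that $g(q)(q^\top x)^m$ descends to a polynomial of degree exactly $m$ — not $2m$ — on $\mathcal{G}_{1,d}$, since this is precisely what puts it inside $\Hom_t(\mathcal{G}_{1,d})$ and lets the cubature hypothesis act. Everything else is routine: the factorisation $P_j=q_jq_j^\top$ and the ensuing collapse of $(P_jX)^s$, the identification of $\sigma_{1,d}$ with the push-forward of the uniform measure on $\mathbb{S}^{d-1}$ under $q\mapsto qq^\top$, and the positive definiteness of the Gram matrix of the degree-$m$ monomials on $\mathbb{S}^{d-1}$ behind the injectivity of $\Phi$.
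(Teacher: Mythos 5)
Your proof is correct. The paper gives no standalone proof of Corollary \ref{cor:zweiter Versuch}: the result is attributed to \cite{Graf:2014qd} and presented as a consequence of the chain ``cubature for $\Hom_t(\G_{1,d})$ $\Leftrightarrow$ cubature for $\Hom_{2t}(\mathbb{S}^{d-1})$ (via \eqref{eq:sphere = G}) $\Rightarrow$ (Theorem \ref{th:zwei}~b)) $\{(Q_jx)^t\}_{j=1}^n$ is a frame for $\Hom_t(\R^d)$ $\Rightarrow$ (Proposition \ref{pro:con Q}~a)) frames for all $\Hom_m(\R^d)$ with $m\leq t$''. You replace this chain by a single direct computation at degree $m=|r|$: instead of establishing the frame property at degree $t$ and differentiating down, you use the nesting $\Hom_m(\G_{1,d})\subseteq\Hom_t(\G_{1,d})$ (multiplication by $\trace(P)^{t-m}\equiv 1$) to apply the cubature hypothesis directly to $P\mapsto g(q)(q^\top x)^m$, and you obtain $g$ by inverting the ridge-power operator $\Phi$ on $\Hom_m(\R^d)$. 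That inversion is the same mechanism as the reproducing-kernel step in the proof of Theorem \ref{th:zwei}~b) ($\Phi$ is the Gram operator of the ridge functions, and $\Phi^{-1}$ applied to $x^r$ plays the role of pairing with the kernel $K_m$), so the underlying mathematics coincides; what your packaging buys is, first, that the universality of the coefficients $a^r_s$ --- their independence of the particular cubature --- is completely transparent, since $g$ is fixed before any cubature is chosen, whereas along the paper's route one must additionally track that the frame coefficients produced by Theorem \ref{th:zwei}~b) and Proposition \ref{pro:con Q}~a) retain the form $\omega_j$ times a universal polynomial in $q_j$; and, second, that you never use the hypothesis $t\leq d$, so your argument proves the statement under the weaker assumption $|r|\leq t$. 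The degree bookkeeping you single out (evenness and degree $2m$ in $q$, hence degree $m$ in $P=qq^\top$) and the identification of $\sigma_{1,d}$ with the push-forward of the normalized surface measure under $q\mapsto qq^\top$ are both correct and are exactly the content of \eqref{eq:sphere = G}.
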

For $Q_j\in\mathcal{V}_{1,d}$ and $P_j\in\mathcal{G}_{1,d}$ with $P_j=Q_j^\top Q_j$, one switches between the moments of $Q_jX$ and $P_jX$ by the formula
\begin{equation*}
(P_jX)^s = Q_j^s (Q_jX)^{|s|}, \quad s\in\N^d.
\end{equation*}
It may depend on the context when $P_jX$ or $Q_jX$ is preferred.

 \section{Cubatures in Grassmannians}
Proposition \ref{prop:all weights} connects frames and cubatures beyond $\G_{1,d}$ and applies to the general Grassmannians $\G_{k,d}$ by $\Omega=\G_{k,d}$, $\mathcal{F}=\Hom_t(\G_{k,d})$, and the kernel $K=K_{t,k}$ given by
 \begin{equation*}
 K_{t,k}:\G_{k,d}\times \G_{k,d}\rightarrow \R,\quad (P,R)\mapsto \trace(PR)^t,
 \end{equation*}
 cf.~\cite{Graf:2014qd,Ehler:2014zl}. 
In the following sections, we shall provide further examples for the usefulness of Grassmannian cubatures beyond moment reconstruction. To begin with, we address the issue of constructing cubatures.

\subsection{Numerical construction of cubatures}\label{subsec:5.1}
Cubatures on Grassmannians with constant weights are constructed in \cite{Bachoc:2002aa} from group orbits. There is also a simple method to numerically compute cubature points by some minimization method as we shall outline next. The $t$-fusion frame potential for points $\{P_j\}_{j=1}^n\subset\G_{k,d}$ and weights $\{\omega_j\}_{j=1}^n\subset\R$ is 
\begin{equation*}
\FFP(\{(P_j,\omega_j)\}_{j=1}^n,t):= \sum_{i,j=1}^{n} \omega_j\omega_i\tr(P_{i}P_{j})^{t}.
\end{equation*}
Assuming that $\sum_{j=1}^n\omega_j=1$, the fusion frame potential is lower bounded by  
 \begin{equation}\label{eq:low boun}
\FFP(\{(P_j,\omega_j)\}_{j=1}^n,t)\geq \int_{\G_{k,d}}\int_{\G_{k,d}} \tr(PR)^{t} \mathrm d \sigma_{k,d}(P) \mathrm
  d \sigma_{k,d}(R),
 \end{equation}
cf.~\cite{Breger:2016vn} and also \cite{Bachoc:2010aa}. Since the constant functions are contained in $\Hom_t(\G_{k,d})$, any cubature must satisfy $\sum_{j=1}^n\omega_j=1$. 
\begin{theorem}[\cite{Breger:2016vn,Bachoc:2010aa}]
If $\sum_{j=1}^n\omega_j=1$ and \eqref{eq:low boun}  holds with equality, then $\{(P_j,\omega_j)\}_{j=1}^n$ is a cubature for $\Hom_t(\G_{k,d})$. 
\end{theorem}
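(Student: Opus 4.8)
The plan is to exploit the variational characterization of cubatures via the fusion frame potential, treating the lower bound \eqref{eq:low boun} as the statement that a certain nonnegative quadratic form vanishes exactly when the point/weight configuration integrates all of $\Hom_t(\G_{k,d})$. First I would recall that the kernel $K_{t,k}(P,R)=\tr(PR)^t$ linearly generates $\Hom_t(\G_{k,d})$ and that, because $\G_{k,d}$ is a compact homogeneous space under the orthogonal group with invariant probability measure $\sigma_{k,d}$, this kernel is positive semidefinite; hence it admits a Mercer-type decomposition $K_{t,k}(P,R)=\sum_\ell \lambda_\ell \phi_\ell(P)\phi_\ell(R)$ where $\{\phi_\ell\}$ is an $L_2(\sigma_{k,d})$-orthonormal basis of $\Hom_t(\G_{k,d})$ (obtained, e.g., by decomposing $\Hom_t(\G_{k,d})$ into irreducible $O(d)$-modules, on each of which the invariant kernel acts as a scalar by Schur's lemma) and all $\lambda_\ell>0$.

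Next I would substitute this expansion into both sides of \eqref{eq:low boun}. On the left,
\begin{equation*}
\FFP(\{(P_j,\omega_j)\}_{j=1}^n,t)=\sum_\ell \lambda_\ell\Big(\sum_{j=1}^n \omega_j\phi_\ell(P_j)\Big)^2,
\end{equation*}
while on the right, orthonormality together with $\int \phi_\ell\,\mathrm d\sigma_{k,d}=\langle \phi_\ell,\phi_0\rangle$ (with $\phi_0$ the normalized constant) collapses the double integral to $\lambda_0$, the single term coming from the constant function; equivalently the right-hand side equals $\sum_\ell \lambda_\ell\big(\int \phi_\ell\,\mathrm d\sigma_{k,d}\big)^2$. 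Subtracting, the hypothesis that \eqref{eq:low boun} holds with equality becomes
\begin{equation*}
\sum_\ell \lambda_\ell\Big(\sum_{j=1}^n \omega_j\phi_\ell(P_j)-\int_{\G_{k,d}}\phi_\ell\,\mathrm d\sigma_{k,d}\Big)^2=0.
\end{equation*}
Since every $\lambda_\ell>0$, each squared term vanishes, i.e. $\sum_{j=1}^n\omega_j\phi_\ell(P_j)=\int\phi_\ell\,\mathrm d\sigma_{k,d}$ for every $\ell$. As $\{\phi_\ell\}$ spans $\Hom_t(\G_{k,d})$ and the cubature identity is linear in $f$, this gives $\sum_j\omega_j f(P_j)=\int f\,\mathrm d\sigma_{k,d}$ for all $f\in\Hom_t(\G_{k,d})$, which is precisely the cubature property. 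The normalization $\sum_j\omega_j=1$ is either assumed or falls out as the $\ell=0$ (constant) case.

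The main obstacle is establishing positive semidefiniteness of $K_{t,k}$ and, more precisely, the strict positivity of the eigenvalues $\lambda_\ell$ on $\Hom_t(\G_{k,d})$ — equivalently, that the integral operator with kernel $K_{t,k}$ is a strictly positive operator on this finite-dimensional space, so that no nonconstant polynomial lies in its kernel. This is exactly what makes the lower bound \eqref{eq:low boun} sharp and its equality case rigid; it is proved in the cited references \cite{Breger:2016vn,Bachoc:2010aa} via the $O(d)$-isotypic decomposition and the fact that $\tr(PR)^t$ restricted to each irreducible component is a positive multiple of the corresponding zonal kernel. Granting that input, the remaining steps are the routine Mercer expansion and the "sum of nonnegative terms is zero" argument sketched above; alternatively one can phrase the whole thing without eigenvalues by noting that \eqref{eq:low boun} is an instance of the general principle that for a positive semidefinite reproducing kernel the quadratic form $\FFP$ equals $\big\|\sum_j\omega_j K(P_j,\cdot)-\int K(\cdot,R)\,\mathrm d\sigma_{k,d}(R)\big\|_{\mathcal F}^2+\mathrm{const}$, whose vanishing forces $\sum_j\omega_j K(P_j,\cdot)$ to equal the mean embedding of $\sigma_{k,d}$, which in turn is equivalent to the cubature identity on $\mathcal F=\Hom_t(\G_{k,d})$ since $K$ generates $\mathcal F$.
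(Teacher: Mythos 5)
The paper states this theorem without proof, only with the citations, so the benchmark is the argument in \cite{Breger:2016vn,Bachoc:2010aa}; your proposal is exactly that argument (Mercer/isotypic expansion of the positive semidefinite invariant kernel $\tr(PR)^t$, strict positivity of the eigenvalues on $\Hom_t(\G_{k,d})$ because the kernel linearly generates that space, then the sum-of-squares rigidity of the equality case) and it is correct. The only step you state too quickly is ``subtracting'': with $A_\ell=\sum_{j}\omega_j\phi_\ell(P_j)$ and $B_\ell=\int\phi_\ell\,\mathrm d\sigma_{k,d}$, the identity $\sum_\ell\lambda_\ell A_\ell^2-\sum_\ell\lambda_\ell B_\ell^2=\sum_\ell\lambda_\ell(A_\ell-B_\ell)^2$ requires the cross term $\sum_\ell\lambda_\ell A_\ell B_\ell=\sum_j\omega_j\int K_{t,k}(P_j,R)\,\mathrm d\sigma_{k,d}(R)$ to equal the right-hand side of \eqref{eq:low boun}, which holds by orthogonal invariance (the nonconstant eigenfunctions integrate to zero, $\phi_0$ is the normalized constant) \emph{together with} the hypothesis $\sum_j\omega_j=1$ --- so that hypothesis is genuinely used here rather than ``falling out'' of the $\ell=0$ case.
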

 In order to check for equality in \eqref{eq:low boun}, we require a more explicit expression for the right-hand-side. In fact, it holds
\begin{equation*}
\int_{\G_{k,d}}\int_{\G_{k,d}} \tr(PR)^{t} \mathrm d \sigma_{k,d}(P) \mathrm
  d \sigma_{k,d}(R) = \sum_{\substack{|\pi| = t,\\
      \ell(\pi) \le d/2}
  } \frac{C_{\pi}^2(I_{k})}{C_{\pi}(I_{d})},
\end{equation*}
where $I_d$ denotes the $d\times d$ identity matrix and $\pi$ is an integer partition of $t$ with $\ell(\pi)$ being the number of nonzero parts, and $C_\pi$ are the zonal polynomials, cf.~\cite{Chikuse:2003aa,Muirhead:1982fk,Gross:1987bf}. Evaluation of $C_\pi$ at $I_k$ and $I_d$, respectively, yields
\begin{equation*}
  C_{\pi}(I_{d}) = 2^{|\pi|}|\pi|!\big( \tfrac d2 \big)_{\pi}
  \prod_{1\le i<j \le \ell(\pi)} (2 \pi_{i} - 2\pi_{j} - i+ j) /
  \prod_{i=1}^{\ell(\pi)} (2\pi_{i} + \ell(\pi) - i)!,
\end{equation*}
cf.~\cite{Dumitriu:2007ax}. Here, $(a)_{\pi}$ denotes the generalized hypergeometric coefficient given by
\begin{equation}
\label{eq:hypergeomcoeff}
(a)_{\pi}:= \prod_{i=1}^{\ell(\pi)}\big(a-\tfrac12(i-1)\big)_{\pi_{i}},
\qquad (a)_{s} := a(a+1)\dots(a+s-1).
\end{equation}

Fixing the weights $\{\omega_j\}_{j=1}^n\subset\R$, say $\omega_j=1/n$, for $j=1,\ldots,n$, we can now aim to numerically minimize the $t$-fusion frame potential $\FFP(\{(P_j,\omega_j)\}_{j=1}^n,t)$ over all sets of $n$ points $\{P_j\}_{j=1}^n\subset\G_{k,d}$ and check for equality in \eqref{eq:low boun}, where the right-hand-side can be computed explicitly. See \cite{Breger:2016rc,Breger:2016vn} for successful minimizations in $\G_{2,4}$.

\subsection{Cubatures for approximation of integrals}\label{sec:5.2}
Cubature points enable us to replace integrals over polynomials by finite sums. We now aim to go beyond polynomials and keep track of the integration error. Without loss of generality, we assume $k\leq \frac{d}{2}$ throughout since $\mathcal{G}_{d-k,d}$ can be identified with $\mathcal{G}_{k,d}$. 

The eigenfunctions $\{\varphi_\pi\}_{\ell(\pi)\leq k}$ of the Laplace-Beltrami operator $\Delta$ on $\mathcal{G}_{k,d}$ are an orthonormal basis for $L_2(\G_{k,d})$ and are naturally indexed by integer partitions $\pi$ of length at most $k$. Let $\{-\lambda_\pi\}_{\ell(\pi)\leq k}$ be the corresponding eigenvalues, i.e., 
\begin{equation}\label{eq:eig values explicit} 
\lambda_\pi = 2|\pi|d+4\sum_{i=1}^k \pi_i(\pi_i-i),
\end{equation}
cf.~\cite[Theorem 13.2]{James:1974aa}. Without loss of generality, we choose each $\varphi_\pi$ to be real-valued, in particular, $\varphi_{(0)}\equiv 1$. Essentially following \cite{Brandolini:2014oz,Mhaskar:2010kx}, we formally define $(I-\Delta)^{s/2} f$ to be the distribution on $\mathcal{G}_{k,d}$, such that 
\begin{equation*}
\langle (I-\Delta)^{s/2} f,\varphi_\pi\rangle = (1+\lambda_\pi)^{s/2}\langle f,\varphi_\pi\rangle, \quad\text{ for all }\ell(\pi)\leq k.
\end{equation*}
The Bessel potential space $H^s_p(\mathcal{G}_{k,d})$, for $1\leq p\leq \infty$ and $s \ge 0$, is
\begin{align*}
H^s_p(\mathcal{G}_{k,d})&:=\{f\in L_p(\mathcal{G}_{k,d}) :  \| f\|_{H^s_p}<\infty\},\quad\text{where}\\
\| f\|_{H^s_p} &:= \| (I-\Delta)^{s/2} f  \|_{L_p},
\end{align*}
i.e., $f\in H^s_p(\mathcal{G}_{k,d})$ if and only if $f\in L_p(\mathcal{G}_{k,d})$ and $(I-\Delta)^s f\in L_p(\mathcal{G}_{k,d})$. 

The expected worst case error of integration in Bessel potential spaces of $n$ independent random points endowed with constant weights is of the order $n^{-\frac{1}{2}}$: 
\begin{proposition}[\cite{Graf:2013zl,Nowak:2010rr,Brauchart:fk,Breger:2016vn}]\label{th:random inde}
For $s>k(d-k)/2$, suppose $P_1,\ldots,P_n$ are random points on $\mathcal{G}_{k,d}$, independently identically distributed according to $\sigma_{k,d}$ then it holds
\begin{equation*}
\sqrt{\mathbb{E} \Big[\sup_{\substack{f\in H^s_2(\mathcal{G}_{k,d})\\ \|f\|_{H^s_2}\leq 1}}  \Big|\int_{\mathcal{G}_{k,d}} f(P)\mathrm d\sigma_{k,d}(P) - \frac{1}{n}\sum_{j=1}^n f(P_j)  \Big|^2\Big]}  = cn^{-\frac{1}{2}}
\end{equation*}
with $c^2=\sum_{1\leq \ell(\pi)\leq k} (1+\lambda_\pi)^{-s}$.
\end{proposition}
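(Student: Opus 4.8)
The plan is to compute the expected squared worst-case error directly by exploiting the reproducing kernel structure of the Bessel potential space $H^s_2(\mathcal{G}_{k,d})$. First I would observe that, since $\{\varphi_\pi\}_{\ell(\pi)\leq k}$ is an orthonormal basis of $L_2(\mathcal{G}_{k,d})$ consisting of Laplace-Beltrami eigenfunctions with eigenvalues $-\lambda_\pi$, the norm $\|f\|_{H^s_2}^2 = \sum_\pi (1+\lambda_\pi)^s |\langle f,\varphi_\pi\rangle|^2$; hence the unit ball of $H^s_2$ is the unit ball of a reproducing kernel Hilbert space whose kernel is $\mathbb{K}^s(P,R) = \sum_{\ell(\pi)\leq k} (1+\lambda_\pi)^{-s}\varphi_\pi(P)\varphi_\pi(R)$. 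The condition $s>k(d-k)/2$ is exactly what makes $\sum_{\ell(\pi)\leq k}(1+\lambda_\pi)^{-s}<\infty$ (the number of partitions with $|\pi|=m$ and $\ell(\pi)\leq k$ grows polynomially of degree $k-1$, while $\lambda_\pi\asymp |\pi|d$, so convergence requires $s>k(d-k)/2$, matching the dimension $k(d-k)$ of the manifold), guaranteeing $\mathbb{K}^s$ is a well-defined continuous kernel and in particular that $c^2 = \mathbb{K}^s(P,P) = \sum_{1\leq\ell(\pi)\leq k}(1+\lambda_\pi)^{-s}$ after noting the $\pi=(0)$ term integrates away.

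Next I would use the standard RKHS identity for the worst-case quadrature error of the equal-weight rule $Q_n f = \frac1n\sum_{j=1}^n f(P_j)$: writing the error functional as $\ell(f) = \int f\,\mathrm d\sigma_{k,d} - Q_n f$, its representer in the RKHS is $h = \int \mathbb{K}^s(P,\cdot)\,\mathrm d\sigma_{k,d}(P) - \frac1n\sum_j \mathbb{K}^s(P_j,\cdot)$, so that
\begin{equation*}
\sup_{\|f\|_{H^s_2}\leq 1}|\ell(f)|^2 = \|h\|_{H^s_2}^2 = \iint \mathbb{K}^s\,\mathrm d\sigma\,\mathrm d\sigma - \frac2n\sum_j\int \mathbb{K}^s(P,P_j)\,\mathrm d\sigma(P) + \frac1{n^2}\sum_{i,j}\mathbb{K}^s(P_i,P_j).
\end{equation*}
Then I would take the expectation over $P_1,\dots,P_n$ i.i.d.\ $\sim\sigma_{k,d}$. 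The key orthogonality fact is that, because each $\varphi_\pi$ with $\pi\neq(0)$ has mean zero against $\sigma_{k,d}$, one has $\int \mathbb{K}^s(P,R)\,\mathrm d\sigma_{k,d}(R) = (1+\lambda_{(0)})^{-s}\varphi_{(0)}(P)\varphi_{(0)}(P)$-type constant, in fact $\iint\mathbb{K}^s = \mathbb{E}\int\mathbb{K}^s(P,P_j)\,\mathrm d\sigma(P) = 1$ (the $\pi=(0)$ contribution). So the first three terms collapse, and in the double sum the $n^2-n$ off-diagonal terms have expectation $\iint\mathbb{K}^s=1$ while the $n$ diagonal terms contribute $\mathbb{E}\,\mathbb{K}^s(P_j,P_j) = \mathbb{K}^s(P,P) = c^2$ (constant by orthogonal invariance). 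Collecting: $\mathbb{E}\|h\|^2 = 1 - 2 + \frac1{n^2}\big((n^2-n)\cdot 1 + n c^2\big) = \frac{c^2-1}{n}$. Here $c^2-1 = \sum_{1\leq\ell(\pi)\leq k}(1+\lambda_\pi)^{-s}$ once the $\pi=(0)$ term is removed — so I should be careful about bookkeeping of the constant term, which is the only subtle point: with the normalization $\varphi_{(0)}\equiv 1$ and $\lambda_{(0)}=0$, the full kernel has a $+1$ constant piece, and the stated $c^2$ already excludes it, so the answer is $c^2/n$ after the cancellations are done with the convention that $\mathbb{K}^s$ in the error analysis uses the \emph{centered} kernel $\sum_{1\leq\ell(\pi)\leq k}(1+\lambda_\pi)^{-s}\varphi_\pi(P)\varphi_\pi(R)$, which is legitimate since constants are integrated exactly by any equal-weight rule. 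Taking square roots gives $cn^{-1/2}$.

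The main obstacle I anticipate is purely the constant-term bookkeeping together with justifying that $\mathbb{K}^s$ is genuinely the reproducing kernel and is continuous — i.e., that the series $\sum_{\ell(\pi)\leq k}(1+\lambda_\pi)^{-s}\varphi_\pi(P)\varphi_\pi(R)$ converges absolutely and uniformly. For this one needs the eigenvalue growth \eqref{eq:eig values explicit}, a count of partitions of length $\leq k$, and the sup-norm bound $\|\varphi_\pi\|_\infty \lesssim \lambda_\pi^{k(d-k)/4}$ (a Weyl-type bound for eigenfunctions on the compact symmetric space $\mathcal{G}_{k,d}$), which together with $s>k(d-k)/2$ forces convergence; alternatively one cites $\mathbb{K}^s(P,P)=\sum(1+\lambda_\pi)^{-s}<\infty$ and uses orthogonal invariance to get that $\mathbb{K}^s(P,P)$ is constant, then Cauchy–Schwarz in the kernel series handles $P\neq R$. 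Everything else — the RKHS worst-case-error identity and the vanishing of cross terms by zero-mean of the $\varphi_\pi$'s — is routine once the kernel is in hand. I would present the computation with the centered kernel from the outset to avoid the $+1/-1$ juggling entirely.
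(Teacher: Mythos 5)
Your argument is correct and is essentially the standard one: the paper itself gives no proof of this proposition (it is quoted from the cited references), and those references establish it exactly as you do, via the RKHS worst-case-error identity for the (centered) kernel $\sum_{1\leq\ell(\pi)\leq k}(1+\lambda_\pi)^{-s}\varphi_\pi(P)\varphi_\pi(R)$ followed by the i.i.d.\ expectation, in which only the $n$ diagonal terms survive and contribute $c^2/n$. The only blemish is your heuristic for why $s>k(d-k)/2$ gives convergence of $\sum(1+\lambda_\pi)^{-s}$ --- the eigenvalues grow like $|\pi|d+\sum_i\pi_i^2$, not like $|\pi|d$, so the clean justification is Weyl's law on the $k(d-k)$-dimensional manifold rather than the partition count you sketch --- but the conclusion and the rest of the computation are sound.
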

%
  The following result follows from \cite[Theorem 2.12]{Brandolini:2014oz}:
\begin{theorem}[\cite{Brandolini:2014oz}]\label{th:brandolini}
Let $s>k(d-k)/p$. Any sequence of cubatures $\{(P^{(t)}_j,\omega^{(t)}_j)\}_{j=1}^{n_t}$ with nonnegative weights for $\Hom_t(\G_{k,d})$, $t=1,2,\ldots,$ satisfies, for $f\in H^s_p(\mathcal{G}_{k,d})$,
\begin{equation*}
\Big|\int_{\mathcal{G}_{k,d}} f(P)\mathrm d\sigma_{k,d}(P)-\sum_{j=1}^{n_t} \omega^t_j f(P^t_j)\Big| \lesssim  t^{-s}\|f\|_{H^s_p}.
\end{equation*}
\end{theorem}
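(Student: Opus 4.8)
The plan is to prove the cubature error bound by combining a polynomial approximation estimate in the Bessel potential space $H^s_p(\mathcal{G}_{k,d})$ with the exactness of the cubature on $\Hom_t(\mathcal{G}_{k,d})$ and the nonnegativity of the weights. First I would fix $f \in H^s_p(\mathcal{G}_{k,d})$ and let $g$ denote a best (or near-best) approximant to $f$ from the polynomial space $\bigoplus_{0 \le |\pi| \le t,\ \ell(\pi)\le k} \spann\{\varphi_\pi\}$, which is contained in $\Hom_t(\mathcal{G}_{k,d})$ (or at least in the span of polynomials of degree $\le t$ on which the degree-$t$ cubature is exact). The key quantitative input is a Jackson-type estimate $\|f - g\|_{L_\infty} \lesssim t^{-s}\|f\|_{H^s_p}$ valid for $s > k(d-k)/p$; this is exactly the range condition in the statement and is the Sobolev-embedding threshold $s > \dim(\mathcal{G}_{k,d})/p$, since $\dim \mathcal{G}_{k,d} = k(d-k)$. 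Such an estimate is available from the spectral theory of $\Delta$ on the compact homogeneous space $\mathcal{G}_{k,d}$, essentially as cited from Brandolini et al.\ and Mhaskar.

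With $g$ in hand, I would split the integration error in the standard way:
\begin{equation*}
\Big| \int_{\mathcal{G}_{k,d}} f\, \mathrm d\sigma_{k,d} - \sum_{j=1}^{n_t} \omega^{(t)}_j f(P^{(t)}_j) \Big|
\le \Big| \int (f-g)\, \mathrm d\sigma_{k,d} \Big| + \Big| \int g\, \mathrm d\sigma_{k,d} - \sum_{j=1}^{n_t}\omega^{(t)}_j g(P^{(t)}_j) \Big| + \Big| \sum_{j=1}^{n_t}\omega^{(t)}_j (g-f)(P^{(t)}_j) \Big|.
\end{equation*}
The middle term vanishes because $g \in \Hom_t(\mathcal{G}_{k,d})$ and $\{(P^{(t)}_j,\omega^{(t)}_j)\}$ is a cubature for that space. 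The first term is bounded by $\|f-g\|_{L_\infty} \lesssim t^{-s}\|f\|_{H^s_p}$ since $\sigma_{k,d}$ is a probability measure. For the third term I would use that the weights are nonnegative and sum to $1$ (recall any cubature satisfies $\sum_j \omega^{(t)}_j = 1$ since constants lie in $\Hom_t$), so $\big|\sum_j \omega^{(t)}_j (g-f)(P^{(t)}_j)\big| \le \|f-g\|_{L_\infty}\sum_j \omega^{(t)}_j = \|f-g\|_{L_\infty} \lesssim t^{-s}\|f\|_{H^s_p}$. Adding the three pieces gives the claimed bound.

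The main obstacle is establishing the Jackson-type inequality $\|f-g\|_{L_\infty} \lesssim t^{-s}\|f\|_{H^s_p}$ with the sharp dependence on $t$ and in the full range $s > k(d-k)/p$, including the harder case $p \ne 2$ where one cannot simply use orthogonality. The clean route is to invoke the kernel-based approximation machinery: construct $g$ as a filtered/delayed-means projection $g = \sum_\pi \eta(|\pi|/t)\, \langle f,\varphi_\pi\rangle \varphi_\pi$ with a smooth cutoff $\eta$, show the associated kernel has $L_1$-bounded Lebesgue constants uniformly in $t$ (so the operator is bounded on $L_\infty$ independently of $t$), and then combine this with the mapping property of $(I-\Delta)^{-s/2}$ and the spectral gap $\lambda_\pi \gtrsim t^2$ for $|\pi| > t$ to extract the $t^{-s}$ rate. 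This is precisely the content of \cite[Theorem 2.12]{Brandolini:2014oz}, so in the write-up I would cite that theorem for the approximation step and present only the short error-splitting argument above, which is the genuinely new (if elementary) part in the present setting.
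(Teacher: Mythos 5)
Your error-splitting skeleton is sound, and the paper itself gives no proof of this statement (it is quoted from \cite[Theorem 2.12]{Brandolini:2014oz}), so everything hinges on whether your two remainder bounds close the argument. They do not for $p<\infty$: the Jackson-type inequality you invoke, $\|f-g\|_{L_\infty}\lesssim t^{-s}\|f\|_{H^s_p}$, is false in that range. Passing from $L_p$-control of $(I-\Delta)^{s/2}f$ to uniform control costs $\dim(\G_{k,d})/p=k(d-k)/p$ derivatives: $H^s_p$ embeds into the H\"older--Zygmund class of order $s-k(d-k)/p$ and no better, so the best uniform approximation by elements of $\Hom_t(\G_{k,d})$ that one can guarantee is $t^{-(s-k(d-k)/p)}\|f\|_{H^s_p}$. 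Indeed, if $\|f-g\|_{L_\infty}\lesssim t^{-s}\|f\|_{H^s_p}$ held for all $t$, the Bernstein converse theorem would force every $f\in H^s_p$ into the Zygmund class of order $s$, which fails already on the circle for $p=1$ (take $f$ a Bessel potential of an approximate point mass). Consequently your argument, which bounds both $\int(f-g)$ and $\sum_j\omega_j^{(t)}(f-g)(P_j^{(t)})$ by $\|f-g\|_{L_\infty}$, proves the theorem only for $p=\infty$ (where it reduces to Theorem \ref{th:0} plus the splitting) and yields the strictly weaker rate $t^{-(s-k(d-k)/p)}$ otherwise. Note also that your closing citation of \cite[Theorem 2.12]{Brandolini:2014oz} ``for the approximation step'' is circular in spirit and inaccurate in substance: that theorem is the quadrature bound itself, not a Jackson inequality, and the Jackson inequality you need is not (and cannot be) proved there.

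The missing idea is the treatment of the discrete term, which is where the nonnegativity of the weights enters in an essential way rather than merely through $\sum_j\omega_j^{(t)}=1$. One must show $\sum_j\omega_j^{(t)}\,\bigl|(f-g)(P_j^{(t)})\bigr|\lesssim t^{-s}\|f\|_{H^s_p}$ without detouring through $L_\infty$. The mechanism in \cite{Brandolini:2014oz} is a Marcinkiewicz--Zygmund/bounded-density estimate for the discrete measure $\nu_t=\sum_j\omega_j^{(t)}\delta_{P_j^{(t)}}$: testing $\nu_t$ against localized polynomial bump majorants of degree $\le t$ (which the rule integrates exactly) and using $\omega_j^{(t)}\ge 0$ gives $\nu_t(B_r(P))\lesssim r^{k(d-k)}$ for all $r\gtrsim 1/t$, i.e.\ $\nu_t$ behaves like $\sigma_{k,d}$ at scale $1/t$. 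One then decomposes $f-g$ into dyadic spectral blocks at frequencies $2^mt$ and integrates each block against $\nu_t$ using this density bound together with Nikolskii-type inequalities, which is what recovers the full rate $t^{-s}$ under the hypothesis $s>k(d-k)/p$. Without some version of this step, the theorem as stated for general $p$ is out of reach of the elementary splitting argument.
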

\begin{remark}\label{rem:111}
For any $t=1,2,\ldots$, there exist cubatures $\{(P^{(t)}_j,\omega^{(t)}_j)\}_{j=1}^{n_t}$ for $\Hom_t(\G_{k,d})$ with positive weights such that 
\begin{equation}\label{eq:asym cov}
n_t \asymp t^{k(d-k)},\quad t=1,2,\ldots,
\end{equation}
 cf.~\cite{Harpe:2005fk}. Grassmannian $t$-designs are cubatures for $\Hom_t(\G_{k,d})$ with constant weights $\omega_j=1/n$, for $j=1,\ldots,n$, and there exist Grassmannian $t$-designs that satisfy the asymptotics \eqref{eq:asym cov}, cf.~\cite{Etayo:2016qq}.
 \end{remark}
%
If \eqref{eq:asym cov} is satisfied, then Theorem \ref{th:brandolini} leads to
\begin{equation}\label{eq:general cub fomula}
\Big|\int_{\mathcal{G}_{k,d}} f(P)\mathrm d\mu_{k,d}(P)-\sum_{j=1}^{n_t} \omega^t_j f(P^t_j)\Big| \lesssim  n_t^{-\frac{s}{k(d-k)}}\|f\|_{H^s_p}.
\end{equation}
The condition $s>k(d-k)/p$, for $p=2$, in Theorem \ref{eq:asym cov} then yields that cubature points do better than the random points in Proposition \ref{th:random inde}. 
Given any sequence of points of cardinality $n_t$, the rate $n_t^{-\frac{s}{k(d-k)}}$ cannot be improved, cf.~\cite{Brandolini:2014oz}.


\subsection{Cubatures for function approximation}\label{sec:5.3}
The basic idea for applying cubature points in function approximation is quite simple. 
The standard expansion of any $f\in L_2(\G_{k,d})$ in the orthogonal basis $\{\varphi_\pi\}_{\ell(\pi)\leq k}$ yields  
\begin{equation}\label{eq:approx 1}
f  = \sum_{\ell(\pi)\leq k}  \langle f, \varphi_\pi\rangle \varphi_\pi \approx \sum_{\substack{|\pi|\leq t\\ \ell(\pi)\leq k }}\langle f, \varphi_\pi\rangle \varphi_\pi,
\end{equation}
where the approximation is simply derived by truncating the infinite series at $|\pi|\leq t$. The inner product $\langle f, \varphi_\pi\rangle$ is an integral that we approximate by the concept of cubatures, i.e., the error for approximating the integral by a finite sum is steered by \eqref{eq:general cub fomula}:
\begin{align}
\sum_{\substack{|\pi|\leq t\\ \ell(\pi)\leq k }}\langle f, \varphi_\pi\rangle \varphi_\pi& = \sum_{\substack{|\pi|\leq t\\ \ell(\pi)\leq k }} \int_{\G_k,d} f(P) \varphi_i(P)\mathrm d\sigma_{k,d}(P)  \varphi_i \nonumber\\
& \approx  \sum_{\substack{|\pi|\leq t\\ \ell(\pi)\leq k }}\sum_{j=1}^n \omega_j f(P_j) \varphi_\pi(P_j)  \varphi_\pi \label{eq:second apporx}\\
&  =  \sum_{j=1}^n \omega_j f(P_j) \sum_{\substack{|\pi|\leq t\\ \ell(\pi)\leq k }} \varphi_\pi(P_j)  \varphi_\pi.\nonumber
\end{align}
If we define $K_t(P,R)=\sum_{\substack{|\pi|\leq t\\ \ell(\pi)\leq k }} \varphi_\pi(P)  \varphi_\pi(R)$, then we arrive at the approximation 
\begin{equation}\label{eq:approx all}
f\approx  \sum_{j=1}^n \omega_j f(P_j) K_t(P_j,\cdot).
\end{equation}
The right-hand-side of \eqref{eq:approx all} is composed by two separate approximations, truncation of the series \eqref{eq:approx 1} and the approximation of the integral via cubatures \eqref{eq:second apporx}. To obtain suitable error rates, it turns out that we better replace the sharp truncation by a smoothed version, i.e., we define the kernel $K_t$ on $\G_{k,d} \times \G_{k,d}$ by
\begin{equation}\label{eq:sigma again and new}
  K_t(P,Q)= \sum_{\ell(\pi)\leq k} h(t^{-2}\lambda_\pi) \varphi_\pi(P)\varphi_\ell(Q),
\end{equation}
where $h:\R_{\geq 0}\rightarrow\R$ is an infinitely often differentiable and
nonincreasing function with $h(x)=1$, for $x\le 1/2$, and $h(x)=0$, for
$x\ge 1$. 
The smoothed series truncation becomes the expression 
\begin{equation}\label{eq:approx sigma}
\sigma_{t}(f) := \int_{\mathcal{G}_{k,d}} f(P) K_t(P,\cdot) \mathrm d\sigma_{k,d}(P),
\end{equation}
and $\sigma_t(f)$ approximates $f$ with an error rate that matches the ones in Theorem \ref{th:brandolini}:
\begin{theorem}[\cite{Mhaskar:2010kx}]\label{th:0}
If $f\in H^s_p(\mathcal{G}_{k,d})$, then 
\begin{equation*}
 \|f - \sigma_t(f)\|_{L_p} \lesssim t^{-s} \|f\|_{H^s_p}.
\end{equation*}
\end{theorem}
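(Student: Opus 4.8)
The plan is to realize $\sigma_t$ as a spectral multiplier on $\mathcal{G}_{k,d}$ and reduce everything to one uniform multiplier estimate. Writing $\Delta' = -\Delta$, the formula \eqref{eq:approx sigma} together with \eqref{eq:sigma again and new} shows that $\sigma_t = h(t^{-2}\Delta')$, i.e. $\sigma_t(f) = \sum_{\ell(\pi)\le k} h(t^{-2}\lambda_\pi)\langle f,\varphi_\pi\rangle\varphi_\pi$, while $(I-\Delta)^{s/2}$ is the multiplier $(1+\lambda_\pi)^{s/2}$ and $\|f\|_{H^s_p} = \|(I-\Delta)^{s/2}f\|_{L_p}$ by definition. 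In the Hilbert-space case $p=2$ the conclusion is then immediate from Parseval: $1-h(t^{-2}\lambda_\pi)$ vanishes whenever $\lambda_\pi\le t^2/2$, so $\|f-\sigma_t(f)\|_{L_2}^2 = \sum_{\lambda_\pi > t^2/2}|1-h(t^{-2}\lambda_\pi)|^2|\langle f,\varphi_\pi\rangle|^2 \le \sup_{\lambda > t^2/2}(1+\lambda)^{-s}\,\|(I-\Delta)^{s/2}f\|_{L_2}^2 \lesssim t^{-2s}\|f\|_{H^s_2}^2$. The real content is $p\ne 2$, where Parseval is unavailable and one must control operator norms $L_p\to L_p$.

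The key lemma I would isolate is a uniform band-limited multiplier estimate: for any fixed $a\in C^\infty(\R_{\ge 0})$ that is constant near $0$ and vanishes near $\infty$ (more generally, any fixed symbol satisfying Hörmander–Mikhlin type bounds $|a^{(\ell)}(x)|\lesssim (1+x)^{-\ell}$), the operator $f\mapsto \sum_{\ell(\pi)\le k} a(R^{-2}\lambda_\pi)\langle f,\varphi_\pi\rangle\varphi_\pi$ is bounded on $L_p(\mathcal{G}_{k,d})$ for every $1\le p\le\infty$, with norm independent of $R>0$. Equivalently, one needs the localization estimate $|K^{(a)}_R(P,Q)|\le C_\kappa\,R^{k(d-k)}\,(1+R\,\rho(P,Q))^{-\kappa}$ for every $\kappa$, where $\rho$ is the geodesic distance on $\mathcal{G}_{k,d}$ and $K^{(a)}_R$ is the kernel of the operator above; $L_1$ and $L_\infty$ boundedness then follow from $\sup_P\int|K^{(a)}_R(P,Q)|\,\mathrm d\sigma_{k,d}(Q)\lesssim 1$ (Schur's test), and the intermediate $p$ by the same argument or by interpolation. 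This localization in turn rests on the explicit Weyl-type growth of the spectrum encoded in \eqref{eq:eig values explicit}, namely $\#\{\pi:\ell(\pi)\le k,\ \lambda_\pi\le\Lambda\}\asymp\Lambda^{k(d-k)/2}$, combined with either finite propagation speed of the wave group $\cos(\tau\sqrt{\Delta'})$ and the spectral calculus, or equivalently sub-Gaussian heat-kernel bounds on the compact Riemannian manifold $\mathcal{G}_{k,d}$; this is the analytic heart of the argument and is what \cite{Mhaskar:2010kx,Maggioni:2008fk,Brandolini:2014oz} supply.

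Granting the lemma, I would assemble the estimate by a Littlewood–Paley decomposition. Since $h\equiv 1$ near the origin, $\sigma_t$ is the identity on the span of the $\varphi_\pi$ with $\lambda_\pi\le t^2/2$; together with $\|\sigma_t\|_{L_p\to L_p}\lesssim 1$ (a case of the lemma) this gives $\sigma_t(f)\to f$ in $L_p$ for every $f\in L_p$. Put $\eta_0 = \sigma_1$ and $\eta_j = \sigma_{2^j}-\sigma_{2^{j-1}}$ for $j\ge 1$, so $f=\sum_{j\ge 0}\eta_j(f)$ in $L_p$, with $\eta_j$ a multiplier supported in the annulus $\lambda\asymp 2^{2j}$. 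On that annulus $(1+\lambda)^{-s/2}\asymp 2^{-js}$, and a Taylor expansion of $(1+\lambda)^{-s/2}$ in $2^{-2j}\lambda$ shows that the symbol of $\eta_j\circ(I-\Delta)^{-s/2}$ equals $2^{-js}$ times a symbol of the type covered by the lemma at scale $R=2^j$, with Mikhlin bounds uniform in $j$; hence $\|\eta_j(f)\|_{L_p}\lesssim 2^{-js}\|(I-\Delta)^{s/2}f\|_{L_p} = 2^{-js}\|f\|_{H^s_p}$. Choosing $J$ with $2^J\asymp t$ and writing $f-\sigma_t(f) = \sum_{j>J}\eta_j(f) + (\sigma_{2^J}(f)-\sigma_t(f))$, the tail sums to $\lesssim \sum_{j>J}2^{-js}\|f\|_{H^s_p}\asymp 2^{-Js}\|f\|_{H^s_p}\asymp t^{-s}\|f\|_{H^s_p}$, while $\sigma_{2^J}-\sigma_t$ is a single multiplier supported in $\lambda\asymp t^2$ and bounded by $\lesssim t^{-s}\|f\|_{H^s_p}$ by the same Taylor-plus-lemma argument; adding the two contributions gives the claim.

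I expect the only genuine obstacle to be the uniform multiplier / kernel-localization lemma: that is where the geometry of $\mathcal{G}_{k,d}$ and the precise spectral asymptotics \eqref{eq:eig values explicit} enter, and it is not formal. Everything past it — the dyadic splitting, the Taylor expansion of the Bessel symbol on each annulus, and the geometric summation of $2^{-js}$ — is routine bookkeeping.
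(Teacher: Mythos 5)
The paper does not prove Theorem~\ref{th:0}; it quotes it from \cite{Mhaskar:2010kx} without argument, so there is no in-paper proof to compare against. Your sketch correctly reconstructs the standard argument of that reference and its relatives (\cite{Maggioni:2008fk,Brandolini:2014oz}): the $p=2$ Parseval computation is right, and for general $p$ you have correctly isolated the one nontrivial ingredient --- the uniform kernel-localization/spectral-multiplier estimate obtained from heat-kernel or finite-propagation-speed bounds on the compact manifold $\mathcal{G}_{k,d}$ --- after which the dyadic decomposition and the $2^{-js}$ summation are routine, exactly as you say.
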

To approximate $f$ from finitely many samples, we combine the smoothed truncation with cubature points to replace the integral by a finite sum. For sample points $\{P_j\}_{j=1}^{n}\subset\mathcal{G}_{k,d}$ and weights $\{\omega_j\}_{j=1}^{n}$, we define
\begin{equation}\label{eq:def sigma tilde etc}
\sigma_t(f,\{(P_j,\omega_j)\}_{j=1}^n):=\sum_{j=1}^n \omega_j f(P_j)K_t(P_j,\cdot)
\end{equation}
which coincides with \eqref{eq:approx all} but the kernel $K_t$ is from \eqref{eq:sigma again and new}. 
Note that we must now consider functions $f$ in Bessel potential spaces, for which point evaluation makes sense. Note that  $\sigma_{r(t)}(f,\{(P_j,\omega_j)\}_{j=1}^n)$ is contained in $\Hom_t(\G_{k,d})$ for $r(t)=\sqrt{\lceil \frac{4}{k}t^2\rceil}$, cf.~\cite[Theorem 5]{Breger:2016vn} for a slightly larger function $r$. The following approximation is a consequence of \cite[Proposition 5.3]{Mhaskar:2010kx}:
\begin{theorem}[\cite{Breger:2016vn}]\label{th:m 0}
If $\{(P^{(t)}_j,\omega^{(t)}_j)\}_{j=1}^{n_t}$ is a sequence of cubatures with nonnegative weights for $\Hom_{2t}(\G_{k,d})$, $t=1,2,\ldots$, then, for $f\in H^s_\infty(\mathcal{G}_{k,d})$, 
\begin{equation}\label{eq:error approx finale}
\|f-\sigma_{r(t)}(f,\{(P^{(t)}_j,\omega^{(t)}_j)\}_{j=1}^{n_t})\|_{L_\infty} \lesssim t^{-s} (\|f\|_{L_\infty}+ \|f\|_{H^s_\infty}),
\end{equation}
where $r(t)=\sqrt{\lceil \frac{4}{k}t^2\rceil}$.
\end{theorem}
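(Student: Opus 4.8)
The plan is to decompose the error into the two sources identified after \eqref{eq:approx all}: the smoothed-truncation error $f-\sigma_{r(t)}(f)$, and the cubature-replacement error $\sigma_{r(t)}(f)-\sigma_{r(t)}(f,\{(P_j^{(t)},\omega_j^{(t)})\}_{j=1}^{n_t})$. For the first, I would invoke Theorem \ref{th:0} directly: since $f\in H^s_\infty(\mathcal{G}_{k,d})$, we have $\|f-\sigma_{r(t)}(f)\|_{L_\infty}\lesssim r(t)^{-s}\|f\|_{H^s_\infty}\lesssim t^{-s}\|f\|_{H^s_\infty}$, using $r(t)\asymp t$. The real work is bounding the second term uniformly in the evaluation variable.

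For the cubature-replacement term, fix $R\in\mathcal{G}_{k,d}$ and write the difference as
\begin{equation*}
\Big(\sigma_{r(t)}(f)-\sigma_{r(t)}(f,\{(P_j^{(t)},\omega_j^{(t)})\}_{j=1}^{n_t})\Big)(R)=\int_{\mathcal{G}_{k,d}} g_R(P)\,\mathrm d\sigma_{k,d}(P)-\sum_{j=1}^{n_t}\omega_j^{(t)} g_R(P_j^{(t)}),
\end{equation*}
where $g_R(P):=f(P)K_{r(t)}(P,R)$. The key structural point is that $K_{r(t)}(\cdot,R)$ lies in the span of the $\varphi_\pi$ with $\lambda_\pi\le r(t)^2$, hence (via \eqref{eq:eig values explicit}, which gives $\lambda_\pi\gtrsim |\pi|$ when $\ell(\pi)\le k$) in $\Hom_\tau(\mathcal{G}_{k,d})$ for $\tau\lesssim r(t)^2/?$ — more precisely the choice $r(t)=\sqrt{\lceil \tfrac4k t^2\rceil}$ is calibrated exactly so that $K_{r(t)}(\cdot,R)\in\Hom_{2t}(\mathcal{G}_{k,d})$, as noted before the statement. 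Then I would use a Marcinkiewicz–Zygmund / quadrature-stability argument: because $\{(P_j^{(t)},\omega_j^{(t)})\}_{j=1}^{n_t}$ is a cubature for $\Hom_{2t}(\mathcal{G}_{k,d})$ with nonnegative weights, applying it to a product $h_1 h_2$ with $h_1\in\Hom_t$, $h_2\in\Hom_t$ shows the discrete $L_2$ norm on the nodes is comparable to the continuous one on $\Hom_t$; this is the same mechanism as in Proposition \ref{prop:all weights}(b) and is the substance of \cite[Proposition 5.3]{Mhaskar:2010kx}. From this one obtains that the cubature error applied to $g_R$ is controlled by $\mathrm{dist}_{L_1}(f,\Hom_t(\mathcal{G}_{k,d}))$ times a kernel-norm factor that is $O(1)$ after the smoothing, uniformly in $R$.

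The final step is to convert the best-approximation quantity into the stated right-hand side. By Theorem \ref{th:0} (or standard Bessel-potential approximation), $\mathrm{dist}_{L_\infty}(f,\Hom_t(\mathcal{G}_{k,d}))\lesssim t^{-s}\|f\|_{H^s_\infty}$, and the $\|f\|_{L_\infty}$ term appears because controlling $\|g_R\|$-type quantities (both the continuous integral and the discrete sum with $\sum\omega_j=1$) costs a factor $\|f\|_{L_\infty}$ in front of the $O(1)$ kernel contribution. Summing the two error contributions gives $\|f-\sigma_{r(t)}(f,\{(P_j^{(t)},\omega_j^{(t)})\}_{j=1}^{n_t})\|_{L_\infty}\lesssim t^{-s}(\|f\|_{L_\infty}+\|f\|_{H^s_\infty})$.

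The main obstacle I expect is the uniform-in-$R$ control of the cubature-replacement term: one must verify that the smoothed kernel $K_{r(t)}(\cdot,R)$ has the correct polynomial degree ($\le 2t$, matching the hypothesis) and that its relevant norms are bounded independently of $R$ and $t$ after the $h(t^{-2}\lambda_\pi)$ localization — this is precisely where the somewhat delicate estimates of \cite[Proposition 5.3]{Mhaskar:2010kx} enter, and where the factor $r(t)=\sqrt{\lceil\tfrac4k t^2\rceil}$ rather than a naive $r(t)=t$ is forced. Everything else is bookkeeping built on results already available in the excerpt.
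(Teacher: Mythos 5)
Your decomposition (smoothed truncation error via Theorem \ref{th:0} plus a cubature-replacement error handled by inserting a best polynomial approximant and exploiting exactness on $\Hom_{2t}(\G_{k,d})$ together with nonnegativity of the weights and the localization of the kernel from \cite[Proposition 5.3]{Mhaskar:2010kx}) is exactly the intended derivation; the chapter itself gives no proof and simply defers to that proposition, so in structure you are doing what the paper does.

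There is, however, one concrete bookkeeping error that, taken literally, breaks the exactness step. You assert twice that the calibration $r(t)=\sqrt{\lceil \tfrac{4}{k}t^2\rceil}$ ensures $K_{r(t)}(\cdot,R)\in\Hom_{2t}(\G_{k,d})$. What the calibration actually ensures --- and what the remark preceding the theorem states --- is $K_{r(t)}(\cdot,R)\in\Hom_{t}(\G_{k,d})$: by \eqref{eq:sigma again and new} the kernel only involves $\varphi_\pi$ with $\lambda_\pi\le r(t)^2$, and by \eqref{eq:eig values explicit} one has $\lambda_\pi\gtrsim \tfrac{4}{k}|\pi|^2$ for $\ell(\pi)\le k$, so $\lambda_\pi\le r(t)^2$ forces $|\pi|\lesssim t$ (degree $\le t$, not $\le 2t$). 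This is essential: writing $f=p+(f-p)$ with $p$ a near-best approximant from $\Hom_t(\G_{k,d})$, the product $p\cdot K_{r(t)}(\cdot,R)$ lies in $\Hom_{2t}(\G_{k,d})$ precisely because \emph{both} factors have degree $\le t$, and only then is the cubature exact on it. If the kernel genuinely had degree $2t$, the product would have degree up to $3t$ and the hypothesis would not apply. Your own description of the mechanism (``a product $h_1h_2$ with $h_1\in\Hom_t$, $h_2\in\Hom_t$'') is inconsistent with your stated degree bound; replace ``$\le 2t$'' by ``$\le t$'' in both places and the argument closes as you outline, with the remaining quantitative work (uniform-in-$R$ bounds on $\int|K_{r(t)}(P,R)|\,\mathrm d\sigma_{k,d}(P)$ and on $\sum_j\omega_j|K_{r(t)}(P_j,R)|$) supplied by \cite[Proposition 5.3]{Mhaskar:2010kx}.
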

For cubatures satisfying $n_t\asymp t^{k(d-k)}$, see Remark \ref{rem:111}, \eqref{eq:error approx finale} becomes
\begin{equation}\label{eq:rate in n finale}
\|f-\sigma_{r(t)}(f,\{(P^t_j,\omega^t_j)\}_{j=1}^{n_t})\|_{L_\infty} \lesssim n_t^{-\frac{s}{k(d-k)}} (\|f\|_{L_\infty}+ \|f\|_{H^s(L_\infty)}),
\end{equation}
so that we obtain error rates similiar to \eqref{eq:general cub fomula}.

\subsection{Cubatures as efficient coverings}\label{Sec:5.4}
We have seen in the previous sections that cubatures relate to the approximation of integrals and are also useful to approximate functions from samples. Intuitively, good samplings for approximation need to cover the underlying space sufficiently well. Indeed, we shall connect cubatures with asymptotically optimal coverings.

Given any finite collection of points
$\{P_j\}_{j=1}^n\subset \G_{k,d}$, we define the \emph{covering radius} $\rho$ by
\begin{equation*}
\rho:=\rho(\{P_j\}_{j=1}^n):=\sup_{P\in \G_{k,d}} \min_{1\leq j\leq n} \|P-P_j\|,
\end{equation*}
where $\|\cdot\|$ denotes the Frobenius norm on the space of symmetric matrices. Let $B_r(P)$ denote the closed ball of radius $r$ centered at $P\in\G_{k,d}$. Since 
\begin{equation*}
\G_{k,d}=\bigcup_{j=1}^n B_\rho(P_j)
\end{equation*}
 and $\sigma_{k,d}(B_r(P)) \asymp r^{k(d-k)}$, for $P \in \G_{k,d}$ with $0<r\leq 1$, we deduce
\begin{equation*}
1=\sigma_{k,d}(\G_{k,d})\leq \sum_{j=1}^n \sigma_{k,d}(B_\rho(x_j))\lesssim n\rho^{k(d-k)},
\end{equation*}
which leads to the lower bound $n^{-\frac{1}{k(d-k)}}\lesssim \rho$. Point sequences in $\G_{k,d}$ that match this lower bound asymptotically in $n$ are referred to as asymptotically optimal coverings.  

\begin{theorem}[\cite{Breger:2016rc}]\label{th:basis}
Any cubature sequence $\{(P^{(t)}_j,\omega^{(t)}_j)\}_{j=1}^{n_t}$ for $\Hom_t(\G_{k,d})$ with positive weights satisfying \eqref{eq:asym cov} is covering asymptotically optimal, i.e., its covering radius $\rho^{(t)}$ satisfies $n^{-\frac{1}{k(d-k)}}\asymp\rho^{(t)}$.
\end{theorem}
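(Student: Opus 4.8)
The lower bound $n_t^{-1/(k(d-k))}\lesssim\rho^{(t)}$ is exactly the volume-packing estimate recorded just above the statement, so the plan is to establish the matching upper bound; since $n_t\asymp t^{k(d-k)}$ by \eqref{eq:asym cov}, it suffices to prove $\rho^{(t)}\lesssim t^{-1}$ for all sufficiently large $t$. The idea is a ``hidden bump'' argument: if the cubature points left a ball of radius $r$ uncovered, one could place inside it a smooth function that is invisible to the cubature sum but still carries a non-negligible integral, in contradiction with the quantitative integration error estimate of Theorem~\ref{th:brandolini}.

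Concretely, I would fix an even integer $s>k(d-k)$ and, for $r<\rho^{(t)}$, choose $P_0\in\G_{k,d}$ with $B_r(P_0)\cap\{P_j^{(t)}\}_{j=1}^{n_t}=\emptyset$. Transplanting a fixed smooth bump from $\R^{k(d-k)}$ through the exponential chart at $P_0$ and rescaling by $r$ (legitimate once $r$ is below the injectivity radius), one obtains $f\in C^\infty(\G_{k,d})$ with $0\le f\le 1$, $f\equiv1$ on $B_{r/2}(P_0)$, and $\supp f\subset B_r(P_0)$. Because $s$ is even, $(I-\Delta)^{s/2}$ is an honest order-$s$ differential operator with smooth coefficients on the compact manifold $\G_{k,d}$, so the chain rule together with $\sigma_{k,d}(B_\rho(P))\asymp\rho^{k(d-k)}$ gives $\|f\|_{H^s_1}\lesssim r^{-s}\,\sigma_{k,d}(B_r(P_0))\lesssim r^{k(d-k)-s}$. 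Applying Theorem~\ref{th:brandolini} with $p=1$ to the nonnegative-weight cubature sequence, and using that $f(P_j^{(t)})=0$ for every $j$, one gets
\begin{equation*}
\sigma_{k,d}\big(B_{r/2}(P_0)\big)\le\int_{\G_{k,d}}f\,\mathrm d\sigma_{k,d}=\Big|\int_{\G_{k,d}}f\,\mathrm d\sigma_{k,d}-\sum_{j=1}^{n_t}\omega_j^{(t)}f(P_j^{(t)})\Big|\lesssim t^{-s}\|f\|_{H^s_1}\lesssim t^{-s}r^{k(d-k)-s}.
\end{equation*}
Since the left-hand side is $\gtrsim r^{k(d-k)}$, this forces $r^s\lesssim t^{-s}$, i.e.\ $r\lesssim t^{-1}$ with a constant independent of $t$ and $r$. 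First this shows $\rho^{(t)}\to0$; then, once $\rho^{(t)}$ is below the injectivity radius and below $1$, letting $r\uparrow\rho^{(t)}$ yields $\rho^{(t)}\lesssim t^{-1}\asymp n_t^{-1/(k(d-k))}$, which combined with the lower bound gives $\rho^{(t)}\asymp n_t^{-1/(k(d-k))}$.

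The one genuinely technical step is the uniform-in-$r$ bound $\|f\|_{H^s_1}\lesssim r^{k(d-k)-s}$ for the rescaled bump; keeping $s$ an even integer is what lets one avoid the failure of zero-order pseudodifferential operators to be bounded on $L^1$, after which the estimate is routine. One also needs the implied constant in Theorem~\ref{th:brandolini} to depend only on $s,p,k,d$ (and not on the cubature sequence), which is what that theorem provides. The same argument runs with the $L_\infty$ function-reconstruction bound of Theorem~\ref{th:m 0} in place of Theorem~\ref{th:brandolini}, applied to a unit-height bump, after observing that the given cubature for $\Hom_t(\G_{k,d})$ is automatically one for $\Hom_{2\lfloor t/2\rfloor}(\G_{k,d})$ because $\Hom_{t-1}(\G_{k,d})\subseteq\Hom_t(\G_{k,d})$ (multiply a degree-$(t-1)$ representative by $\trace P\equiv k$).
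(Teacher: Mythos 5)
Your argument is correct, and it is essentially the proof of the cited reference: the chapter itself only derives the volume-packing lower bound $n_t^{-1/(k(d-k))}\lesssim\rho^{(t)}$ and states the matching upper bound without proof, while \cite{Breger:2016rc} obtains that upper bound exactly as you do, by testing the optimal integration-error estimate (Theorem~\ref{th:brandolini}, in the $p=1$ form where the scaling $\|f\|_{H^s_1}\lesssim r^{k(d-k)-s}$ balances against $\int f\gtrsim r^{k(d-k)}$ to give $r\lesssim t^{-1}$) against a smooth bump supported in an uncovered ball. The only points worth recording explicitly are the ones you already flag — uniformity of the constant in Theorem~\ref{th:brandolini} over all nonnegative-weight cubatures, and the equivalence of geodesic and Frobenius balls so that the chart-based bump sits inside the uncovered Frobenius ball — neither of which causes difficulty.
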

Theorems \ref{th:basis} and \ref{th:m 0} with \eqref{eq:rate in n finale} both reflect the intuition that cubature points satisfying \eqref{eq:asym cov} must be somewhat efficiently distributed on the underlying space.

\subsection{Cubatures for phase retrieval}\label{sec:5.5}
To reflect the versatility of Grassmannian cubatures, we now briefly discuss their use in phase retrieval. The problem of reconstructing vectors from phaseless magnitude measurements has attracted great attention in the recent literature, \cite{Candes:uq,Candes:2012fk,Conca:2013ud,Bandeira:2013fk,Demanet:2012uq} to mention only few. For $x\in\R^d$, the mapping 
\begin{equation*}
\hat{x}:\G_{k,d}\rightarrow\R, \quad P\mapsto \|Px\|^2
\end{equation*}
 is a homogeneous polynomial of degree $2$, hence, contained in $\Hom_2(\G_{k,d})$. Notice that $\hat{x}$ is in a one to one correspondence with the rank one matrix $xx^\top$ since 
 \begin{equation*}
 \hat{x}(P)=x^\top P x = \trace(Pxx^\top).
 \end{equation*}
 The problem of reconstructing  $xx^\top$ from finitely many samples $\{\hat{x}(P_j)\}_{j=1}^n$, where $\{P_j\}_{j=1}^n\subset\G_{k,d}$, is known as the phase retrieval problem. Most publication deal with $k=1$. For $k>1$, we refer to \cite{Bachoc:2012fk,Edidin:2017qp,Ehler:2014qc,Ehler:2013kx,Bahmanpour:bf} and references therein.
 
If there are weights $\{\omega_j\}_{j=1}^n$ such that $\{(P_j,\omega_j)\}_{j=1}^n$ is a cubature for $\Hom_2(\G_{k,d})$, then $xx^\top$ can be directly reconstructed via the closed formula
\begin{equation}\label{eq:recon Px}
xx^\top = \frac{d}{k}\sum_{j=1}^n \omega_j\hat{x}(P_j)\big[\frac{1}{\alpha }\sum_{j=1}^n \omega_j \hat{x}(P_j)P_{j} - \frac{\beta}{\alpha }I_d\big],
\end{equation}
where $\alpha=\frac{2k(d-k)}{d(d+2)(d-1)}$ and $
\beta=\frac{k(kd+k-2)}{d(d+2)(d-1)}$, cf.~\cite{Bachoc:2012fk}. However, cubatures for $\Hom_2(\G_{k,d})$ must have at least $d(d+1)/2$ many points. Thus, the number of samples grows quadratic with the ambient dimension $d$. We are seeking reconstruction from fewer samples at the expense of replacing the closed reconstruction formula with a feasibility problem of a semidefinite program. We consider the problem 
\begin{equation}\label{eq:convex II}
\text{find } A\in\R^{d\times d}_{\succeq 0},  \quad\text{subject to}\quad   \trace(P_jA) = \hat{x}(P_j), \; j=1,\ldots,n,
\end{equation}
where $\R^{d\times d}_{\succeq 0}$ denotes the symmetric, positive semidefinite matrices in $\R^{d\times d}$. 

\begin{theorem}[\cite{Bachoc:2012fk}]\label{th:finale!}
There are constants $c_1,c_2>0$ such that, if $n\geq c_1d$ and $\{P_j\}_{j=1}^n\subset\mathcal{G}_{k,d}$ are chosen independently identically distributed according to $\sigma_{k,d}$,  then the matrix $xx^\top$ is the unique solution to \eqref{eq:convex II} with probability $1-e^{-c_2 n}$, for all $x\in\R^d$. 
\end{theorem}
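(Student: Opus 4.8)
The plan is to recognize \eqref{eq:convex II} as the classical phase retrieval feasibility problem and to reduce it, via a standard rank-one recovery argument, to showing that the measurement map $A \mapsto (\trace(P_j A))_{j=1}^n$ satisfies a suitable injectivity/restricted-isometry-type property on the cone of differences of positive semidefinite matrices, with overwhelming probability once $n \gtrsim d$. Concretely, $xx^\top$ is the unique solution to \eqref{eq:convex II} if and only if for every $A \in \R^{d\times d}_{\succeq 0}$ with $A \neq xx^\top$ we have $\trace(P_j(A - xx^\top)) \neq 0$ for at least one $j$. Writing $H = A - xx^\top$, the matrix $H$ is symmetric with at most one negative eigenvalue (since $A \succeq 0$ and $xx^\top$ has rank one), so it suffices to prove that, with probability $1 - e^{-c_2 n}$, simultaneously for all such $H \neq 0$ one has $\max_j |\trace(P_j H)| > 0$; in fact one proves the quantitative statement $\sum_{j} \omega_j |\trace(P_j H)| \gtrsim \|H\|$ uniformly, which additionally delivers stability.

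First I would set up the probabilistic ingredients. The key deterministic input is that the ``population'' version of the measurement operator is well-conditioned on the relevant subspace: since $P \mapsto \trace(PH)$ lies in $\Hom_1(\G_{k,d}) \subset \Hom_2(\G_{k,d})$ after squaring, the first and second moments of $\trace(PH)$ under $\sigma_{k,d}$ are explicitly computable (this is exactly the content behind the closed formula \eqref{eq:recon Px} and the zonal-polynomial integral identities recalled in Section~\ref{subsec:5.1}), giving $\int_{\G_{k,d}} |\trace(PH)|^2 \,\mathrm d\sigma_{k,d}(P) = \alpha \|H\|^2 + \beta (\trace H)^2$ up to the stated constants, hence comparable to $\|H\|^2$ on the subspace where it does not degenerate. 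Then I would invoke a concentration argument: for fixed $H$, $\frac1n\sum_j |\trace(P_j H)|^2$ concentrates around its mean by Bernstein's inequality (the summands are bounded, since $|\trace(P_j H)| \le k\|H\|$), and — this is the crucial point — one upgrades the pointwise bound to a uniform bound over the set of admissible $H$ by a covering/$\varepsilon$-net argument over the compact set $\{H : \|H\| = 1,\ H \text{ has} \le 1 \text{ negative eigenvalue}\}$, combined with a union bound. The dimension of the ambient space of symmetric matrices is $O(d^2)$, but the set of rank-$\le 2$-ish directions (differences of a PSD matrix and a rank-one matrix) has a net of size $e^{O(d\log d)}$ or, with a more careful Mendelson-type argument exploiting the low-rank structure, $e^{O(d)}$; balancing this against the $e^{-cn}$ tail from Bernstein forces $n \gtrsim d$, which is precisely the regime claimed. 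I would follow the Cand\`es--Strohmer--Voroninski / Bachoc--Ehler line of reasoning here.

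The main obstacle is the uniform lower bound: controlling $\inf_H \frac1n \sum_j |\trace(P_j H)|$ over the non-compact-looking but effectively low-dimensional cone of feasible directions while keeping the sample complexity linear in $d$. A naive net over all of $\R^{d\times d}_{\sym}$ costs $e^{O(d^2)}$ and would only give $n \gtrsim d^2$ (the regime of the closed formula \eqref{eq:recon Px}); extracting $n \gtrsim d$ requires genuinely using that $A - xx^\top$ has only one negative eigenvalue, so that its negative part is rank one and its positive part, though possibly full rank, interacts with the measurements in a way amenable to a ``small-ball'' / descent-cone estimate rather than a crude net. A secondary technical point is handling the degenerate direction $H$ proportional to a multiple of the identity plus $xx^\top$ where $\trace H$ is large: there the population quadratic form is still nondegenerate because of the $\beta(\trace H)^2$ term, but one must check the constants $\alpha,\beta$ never conspire to vanish, which is automatic for $1 \le k \le d-1$. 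Once the uniform lower bound is in hand, uniqueness (and stability) in \eqref{eq:convex II} follows immediately, and the probability bound $1 - e^{-c_2 n}$ with the stated $n \ge c_1 d$ drops out of the net count; I would then remark that the same estimate gives robustness of the recovery to measurement noise, although that is not needed for the statement as phrased.
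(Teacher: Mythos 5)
The chapter itself gives no proof of Theorem \ref{th:finale!}; it is quoted from \cite{Bachoc:2012fk}, whose argument adapts the $k=1$ proof of \cite{Candes:2012fk} to projections of rank $k$. Your reduction is the correct one: $xx^\top$ is the unique feasible point of \eqref{eq:convex II} if and only if no nonzero $H=A-xx^\top$ with at most one negative eigenvalue lies in the null space of $H\mapsto(\trace(P_jH))_{j=1}^n$, and your computation of the population second moment $\int_{\G_{k,d}}\trace(PH)^2\,\mathrm d\sigma_{k,d}(P)=\alpha\|H\|^2+\beta(\trace H)^2$ is exactly the input behind \eqref{eq:recon Px}. (Minor slip: there are no weights $\omega_j$ in this theorem; the $P_j$ are i.i.d.\ and unweighted.)

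The genuine gap is the step you yourself flag as ``the main obstacle.'' The set $\{H=A-xx^\top:\ A\succeq 0,\ \|H\|=1\}$ is \emph{not} effectively low dimensional: it contains the entire unit sphere of the positive semidefinite cone (take $A=xx^\top+X$ with $X\succeq0$), a set of dimension $d(d+1)/2$ whose $\varepsilon$-nets at constant scale have cardinality $e^{\Theta(d^2)}$. So the net of size $e^{O(d)}$ or $e^{O(d\log d)}$ that your union bound relies on does not exist, and Bernstein plus a net over the actual feasible set returns $n\gtrsim d^2$, not $n\gtrsim d$. The mechanism that rescues linear sample complexity is to put \emph{no} net on the positive part at all: since $\trace(P_j\,\cdot)$ is a nonnegative linear functional on PSD matrices, the uniform two-sided bound $\frac1n\sum_j\trace(P_jX)\asymp\frac{k}{d}\trace(X)$ for all $X\succeq0$ simultaneously follows from a single matrix-concentration estimate $\frac1n\sum_jP_j\approx\frac{k}{d}I_d$, valid once $n\gtrsim d$, with no covering argument. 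One then splits $H$ into its component $H_T$ in the rank-$\le 2$ tangent space at $xx^\top$ and the PSD component $H_{T^\perp}$, kills $H_{T^\perp}$ with this $\ell_1$-isometry, and controls $H_T$ either by an approximate dual certificate built via the golfing scheme (the route of \cite{Candes:2012fk,Bachoc:2012fk}) or by a genuine small-ball argument in the style of Kueng, Rauhut and Terstiege. Your sketch names these ingredients but carries out neither, and without one of them the uniform lower bound $\inf_H\frac1n\sum_j|\trace(P_jH)|\gtrsim\|H\|$ is unproved. If you do complete the small-ball/null-space-property route, you would gain something the dual-certificate proof does not deliver for free, namely the ``for all $x\in\R^d$'' uniformity asserted in the statement; that would be worth making explicit.
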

This theorem generalized results in \cite{Candes:uq,Candes:2012fk} from $k=1$ to $k\geq 1$. When the projectors $\{P_j\}_{j=1}^n$ are sampled from the idealized perfect cubature $\sigma_{k,d}$, then the number of samples needed grows linearly with $d$. Next, we shall find a balance between the deterministic cubatures required for \eqref{eq:recon Px} and the full randomness invoked by $\sigma_{k,d}$ used in Theorem \ref{th:finale!}. 

From here on, we suppose that the length $\|x\|$ is known to us. To simplify notation, we make the convention that $P_{0}=I_d$, hence, $\langle xx^*,P_{0}\rangle = \trace(xx^*)=\|x\|^2$, consider the problem
\begin{equation}\label{eq:optimization}
 \text{find }A\in\R^{d\times d}_{\succeq 0}, \quad \text{such that}\quad  \trace(AP_{j}) = \hat{x}(P_j), \; j=0,\ldots,n,
\end{equation}
where $\{P_j\}_{j=1}^n\subset\mathcal{G}_{k,d}$ and $\hat{x}(I_d):= \|x\|^2$. 
Note that \eqref{eq:optimization} is the feasibility problem of a semidefinite programm. For $k=1$, the following result is essentially due to \cite{Gross:2013fk}. The extension to $k\geq 1$ has been derived in \cite{Ehler:2014qc}:
\begin{theorem}[\cite{Ehler:2014qc}]\label{theorem:general ehler}
Suppose that $\|x\|^2$ is known and that $\{(\mathcal{P}_j,\omega_j)\}_{j=1}^n$ is a cubature with nonnegative weights for $\Hom_t(\G_{k,d})$, $t\geq 3$. Let $\mu=\sum_{j=1}^n \omega_j \delta_{\mathcal{P}_j}$ denote the corresponding discrete probability measure, where $\delta_{\mathcal{P}_j}$ is the point measure in $\mathcal{P}_j$. If $\{P_j\}_{j=1}^n\subset\mathcal{G}_{k,d}$ are independently sampled from $\mu$, then with probability at least $1-e^{-\gamma}$, the rank-one matrix $xx^*$ is the unique solution to \eqref{eq:optimization} provided that
\begin{equation}\label{eq:number}
n\geq c_1\gamma  t d^{1+2/t}\log^2(d),
\end{equation}
where $\gamma\geq 1$ is an arbitrary parameter and $c_1$ is a constant, which does not depend on $d$.
\end{theorem}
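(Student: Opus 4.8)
The plan is to reduce the feasibility problem \eqref{eq:optimization} to a standard low-rank matrix recovery guarantee and to establish it by an inexact dual certificate (golfing) argument adapted to the cubature sampling measure. By homogeneity we may assume $\|x\|=1$. Write $\mathcal{A}\colon\R^{d\times d}_{\sym}\to\R^{n}$ for the sampling operator $\mathcal{A}(Z)_{j}=\trace(ZP_{j})$, and recall $P_{0}=I_{d}$, so that any feasible $A$ satisfies $\trace(A)=1$. It is classical that $xx^{*}$ is then the unique feasible point of \eqref{eq:optimization} as soon as (i) $\mathcal{A}$ is injective on the tangent space $T=\{xy^{*}+yx^{*}:y\in\R^{d}\}$, and (ii) there is $Y\in\range(\mathcal{A}^{*})$ with $\mathcal{P}_{T}Y=xx^{*}$ and $\|\mathcal{P}_{T^{\perp}}Y\|_{\mathrm{op}}<1$, where $\mathcal{P}_{T}$ denotes the orthogonal projection onto $T$. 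The entire proof is devoted to producing such a $Y$ with the stated probability and sample count.

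First I would pin down the mean behaviour of the empirical second-moment operator. The map $\mathcal{M}_{n}:=\tfrac1n\sum_{j=1}^{n}\langle\,\cdot\,,\mathcal{P}_{j}\rangle\mathcal{P}_{j}$ has, entrywise, entries that are polynomials of degree $2\le t$ in the projector; hence the cubature property gives $\EE_{\mu}\mathcal{M}_{n}=\int_{\G_{k,d}}\langle\,\cdot\,,P\rangle P\,\mathrm d\sigma_{k,d}(P)=:\mathcal{M}$, the exact ``perfect-cubature'' operator. Its restriction $\mathcal{P}_{T}\mathcal{M}\mathcal{P}_{T}$ is invertible with spectrum bounded above and below by explicit constants depending only on $k$ and $d$ --- these are essentially the quantities $\alpha,\beta$ entering the closed reconstruction formula \eqref{eq:recon Px}, obtained from zonal polynomials as in \cite{Bachoc:2012fk}. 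Thus the only source of error is the deviation of $\mathcal{M}_{n}$ from $\mathcal{M}$, and of the block operators below from their means.

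The heart of the argument is a pair of concentration estimates for projectors drawn from $\mu$. The first is a restricted isometry on $T$: $\|\mathcal{P}_{T}(\mathcal{M}_{n}-\mathcal{M})\mathcal{P}_{T}\|_{\mathrm{op}}\le\tfrac12$ with high probability, via matrix Bernstein. The subtlety is that, $\mu$ being discrete, one cannot bound the summands $\langle\,\cdot\,,\mathcal{P}_{j}\rangle\mathcal{P}_{j}$ uniformly; instead one bounds their moments up to order $t$ by the corresponding Haar moments (again using the cubature property, now for polynomials of degree $\le t$ in $\mathcal{P}$) and converts this into an effective almost-sure bound through Markov's inequality and a union bound over the $n$ draws. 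Balancing the Markov exponent against the union bound is what produces the factor $t\,d^{2/t}$. The second estimate controls, for fixed $W\in T$, the $\ell_{\infty}$-type size of the block measurements $\mathcal{A}_{\ell}(W)$ and of their pull-backs, by exactly the same moment-transfer-plus-Markov device. Both yield, for a single block of $m$ i.i.d.\ projectors, the required bounds with probability $\ge 1-e^{-\gamma'}$ provided $m\gtrsim\gamma'\,t\,d^{1+2/t}\log d$, the extra $\log d$ being the dimension factor in the matrix Bernstein inequality; the hypothesis $t\ge 3$ is what makes the resulting bound informative, i.e.\ better than the $\gtrsim d^{2}$ cost of the closed formula.

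Finally I would run Gross's golfing scheme. Partition $\{1,\dots,n\}$ into $L\asymp\log d$ disjoint blocks, each of size $m\asymp n/L$, with block operators $\mathcal{A}_{\ell}$. Starting from $W_{0}=xx^{*}$ and $Y_{0}=0$, set $Y_{\ell}=Y_{\ell-1}+\mathcal{A}_{\ell}^{*}\mathcal{A}_{\ell}(W_{\ell-1})$, suitably normalised by the inverse of $\mathcal{P}_{T}\mathcal{M}\mathcal{P}_{T}$, and $W_{\ell}=xx^{*}-\mathcal{P}_{T}Y_{\ell}$. The restricted-isometry bound of the previous paragraph forces $\|W_{\ell}\|\le\tfrac12\|W_{\ell-1}\|$, so after $L\asymp\log d$ rounds $\|W_{L}\|$ is negligible; summing the per-block $\ell_{\infty}$-bounds keeps $\|\mathcal{P}_{T^{\perp}}Y_{L}\|_{\mathrm{op}}<1$; and the same isometry bound gives injectivity of $\mathcal{A}$ on $T$. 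Taking $Y=Y_{L}$ verifies (i)--(ii). A union bound over the $L$ blocks, absorbing $L\asymp\log d$ into the sample size, yields overall failure probability $\le e^{-\gamma}$ once $n\gtrsim\gamma\,t\,d^{1+2/t}\log^{2}d$, which is \eqref{eq:number}. The main obstacle is precisely the two concentration estimates of the third paragraph: since the cubature measure is discrete there is no uniform bound on the measurement operators, so one must substitute moment bounds of order $t$ --- exactly what a cubature for $\Hom_{t}(\G_{k,d})$ supplies --- and the optimisation of the moment order against the union bound is what dictates the sharp $t\,d^{1+2/t}$ dependence; threading these bounds through the golfing recursion with constants independent of $d$ is the delicate bookkeeping, for the full details of which one refers to \cite{Ehler:2014qc} and, in the case $k=1$, to \cite{Gross:2013fk}.
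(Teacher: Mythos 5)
The paper offers no proof of this theorem---it is quoted verbatim from \cite{Ehler:2014qc} (building on \cite{Gross:2013fk} for $k=1$)---so there is nothing internal to compare against; your sketch faithfully reproduces the dual-certificate/golfing strategy of those sources, including the moment-transfer-plus-Markov-plus-union-bound device that produces the $t\,d^{2/t}$ factor and the observation that $t\ge 3$ is exactly what makes $d^{1+2/t}$ beat the $d^{2}$ cost of the closed cubature formula \eqref{eq:recon Px}. Two small imprecisions worth fixing: first, the summands $\langle\cdot,\mathcal{P}_j\rangle\mathcal{P}_j$ are in fact uniformly bounded (a projector in $\G_{k,d}$ has Frobenius norm $\sqrt{k}$), so the difficulty is not unboundedness but that the worst-case bound is far lossier than the typical size under $\sigma_{k,d}$, which is precisely why one substitutes the order-$t$ moment bounds that the cubature property supplies; second, the golfing recursion only yields an approximate certificate with $\mathcal{P}_{T}Y$ close to $xx^{*}$ in Frobenius norm, so your condition (ii) should be stated in its inexact form and compensated by the restricted isometry on $T$, as is standard.
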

Hence, choosing random projectors distributed according to discrete probability measures allows us to reconstruct $xx^\top$ with less than $d^2$ many measurements.

\section{Cubatures of varying ranks}\label{sec:6}
In the previous sections, we were dealing with cubatures for Grassmannians of fixed rank. In order to allow for more flexibility, we now aim to remove this restriction, i.e., we shall investigate cubatures for unions of Grassmannians. 
Our present aim is to provide some elementary proofs of some of the results in \cite{Ehler:2014zl} that were derived by the use of representation theoretic concepts.

Given a non-empty set $\mathcal{K}\subset\{1,\ldots,d-1\}$, we define the
corresponding union of Grassmannians by
\begin{equation*}
  \mathcal{G}_{\mathcal{K},d} :=\bigcup_{k\in\mathcal{K}} \mathcal{G}_{k,d} = \{ P \in \R^{d\times d}_{\sym} \;:\; P^{2}=P ,\; \trace(P) \in \mathcal{K}\}.
\end{equation*}
As for a single Grassmannian, the polynomials on $\G_{\K,d}$ are given by
multivariate polynomials in the matrix entries of a given projector
$P \in \G_{\mathcal{K},d}$, i.e., 
\begin{align}\label{eq:def of pol in union}
  \Pol_{t}(\G_{\mathcal{K},d}) := \{ f|_{\G_{\mathcal{K},d}} : f \in \Pol_t(\R^{d\times d}_{\sym}) \}.
\end{align}
The dimension of $\Pol_{t}(\G_{\mathcal{K},d})$ is an indicator of the number of points needed to obtain a cubature on $\G_{\mathcal{K},d}$, cf.~\cite{Harpe:2005fk}. To compute this dimension, we shall first derive a lower bound on $\dim(\Pol_{t}(\G_{\K,d}))$, which is relatively straight-forward:
\begin{proposition}\label{the:dimHomt}
  Let $\mathcal K = \{ k_{i}\}_{i=1}^r \subset \{1,\dots,d-1\}$ and $t \in \N_{0}$ be given such that 
  \begin{equation}
    \label{eq:order_K}
    \min\{k_{1},d-k_{1}\} \ge \dots \ge \min\{k_{r},d-k_{r}\}.
  \end{equation}
  Then it holds
  \begin{equation}
    \label{eq:dim poly bound}
    \dim(\Pol_{t}(\G_{\K,d})) \geq  \sum_{i=1}^{s}
    \dim(\Pol_{t-i+1}(\G_{k_{i},d})),\quad\text{$s:=\min\{t+1,|\mathcal{K}|\}$.}
  \end{equation}
\end{proposition}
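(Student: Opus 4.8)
The plan is to exhibit, for each index $i$ from $1$ to $s$, an explicitly constructed family of polynomials on $\G_{\K,d}$ whose spans are linearly independent across different $i$, and whose total count matches the right-hand side of \eqref{eq:dim poly bound}. The natural building blocks are polynomials of the form $p(P) = q(P)\prod_{\ell<i}(\trace(P)-k_\ell)$ where $q \in \Pol_{t-i+1}(\R^{d\times d}_{\sym})$: on $\G_{\K,d}$ the factor $\prod_{\ell<i}(\trace(P)-k_\ell)$ vanishes on every component $\G_{k_\ell,d}$ for $\ell<i$, so such a polynomial is supported (as a function) only on $\bigcup_{m\ge i}\G_{k_m,d}$. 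The degree of this product is at most $(t-i+1)+(i-1)=t$, so it lies in $\Pol_t(\G_{\K,d})$ as required.

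The key steps, in order, are as follows. First, I would fix the ordering \eqref{eq:order_K} and set $V_i$ to be the linear span inside $\Pol_t(\G_{\K,d})$ of all polynomials $P \mapsto q(P)\prod_{\ell=1}^{i-1}(\trace(P)-k_\ell)$ with $q \in \Pol_{t-i+1}(\R^{d\times d}_{\sym})$, for $i=1,\dots,s$. Second, I would argue that $\dim V_i \ge \dim(\Pol_{t-i+1}(\G_{k_i,d}))$: the restriction map $q \mapsto q|_{\G_{k_i,d}}$ is surjective onto $\Pol_{t-i+1}(\G_{k_i,d})$ by definition \eqref{eq:def of pol in union}, and since the scalar $\prod_{\ell<i}(k_i-k_\ell)$ is a fixed nonzero number (the $k_\ell$ being distinct), multiplication by the trace-product followed by restriction to $\G_{k_i,d}$ is still surjective onto $\Pol_{t-i+1}(\G_{k_i,d})$; hence $\dim V_i$ is at least the dimension of the image, namely $\dim(\Pol_{t-i+1}(\G_{k_i,d}))$. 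Third, and this is the crux, I would show the sum $V_1+\dots+V_s$ is direct, or at least that $\dim(V_1+\dots+V_s)\ge\sum_i\dim(\Pol_{t-i+1}(\G_{k_i,d}))$. The idea is a ``staircase'' / triangular argument: if $f=f_1+\dots+f_s$ with $f_i\in V_i$ and $f\equiv 0$ on $\G_{\K,d}$, then restrict to $\G_{k_1,d}$ — only $f_1$ survives (all $f_i$ with $i\ge2$ carry the factor $\trace(P)-k_1$), so $f_1|_{\G_{k_1,d}}=0$; then restrict to $\G_{k_2,d}$ — now $f_1$ need not vanish there, so this does not immediately kill $f_2$. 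This is the subtlety: a single $f_i$ can contribute on several components. The fix is to work with the pieces modulo what is already accounted for: peel off components $\G_{k_1,d},\G_{k_2,d},\dots$ in order, at stage $i$ quotienting by the polynomials that vanish on $\G_{k_1,d}\cup\dots\cup\G_{k_{i-1},d}$, so that the surviving contribution of $f_i$ to $\G_{k_i,d}$ is genuinely independent, degree $\le t-i+1$, and nonzero unless $q_i$ restricts to zero on $\G_{k_i,d}$.

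The main obstacle I expect is precisely controlling this quotient/filtration and ensuring the degree bookkeeping is tight: one must verify that at stage $i$ the relevant space of polynomials that vanish on $\bigcup_{\ell<i}\G_{k_\ell,d}$ and restrict to $\G_{k_i,d}$ realizes all of $\Pol_{t-i+1}(\G_{k_i,d})$ — this is where the ordering hypothesis \eqref{eq:order_K} enters, guaranteeing $\min\{k_i,d-k_i\}$ is small enough that the needed restrictions exist in the allotted degree — and that the counts stack without overlap. A clean way to package this is to define the filtration $W_i := \{f\in\Pol_t(\G_{\K,d}) : f|_{\G_{k_\ell,d}}=0,\ \ell=1,\dots,i-1\}$, note $W_1\supseteq W_2\supseteq\dots$, observe that restriction to $\G_{k_i,d}$ maps $W_i$ into $\Pol_{t}(\G_{k_i,d})$ with image containing $\prod_{\ell<i}(k_i-k_\ell)\cdot\Pol_{t-i+1}(\G_{k_i,d}) = \Pol_{t-i+1}(\G_{k_i,d})$, and that $W_{i+1}$ is the kernel of this restriction; then
\begin{equation*}
  \dim W_i \;\ge\; \dim W_{i+1} + \dim(\Pol_{t-i+1}(\G_{k_i,d})),
\end{equation*}
and since $W_1=\Pol_t(\G_{\K,d})$ and $W_{s+1}\subseteq W_1$ trivially (dimension $\ge 0$), summing the telescoping inequality over $i=1,\dots,s$ gives \eqref{eq:dim poly bound}. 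The only remaining point of care is that for $i>t+1$ the space $\Pol_{t-i+1}$ is not defined; this is why $s=\min\{t+1,|\K|\}$, so all indices appearing are legitimate.
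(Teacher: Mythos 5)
Your argument is correct and rests on the same mechanism as the paper's proof — multiplying by the linear factors $\prod_{\ell<i}(\Tr(\cdot)-k_\ell)$ to produce polynomials of degree at most $t$ vanishing on the already-handled components, and using surjectivity of restriction onto $\Pol_{t-i+1}(\G_{k_i,d})$ — the only difference being that the paper packages this as an induction on $r$ (exhibiting the explicit linearly independent system $\{f_i|_{\G_{\K,d}}\}\cup\{(\Tr(\cdot)-k_1)g_j|_{\G_{\K,d}}\}$), whereas you unroll it into the filtration $W_1\supseteq\dots\supseteq W_{s+1}$ and a telescoping rank--nullity count, which is an equivalent bookkeeping. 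One small correction to your narrative: the ordering \eqref{eq:order_K} is not actually used anywhere in your final $W_i$ argument (nor in the paper's induction) — as the paper remarks, the lower bound holds for every ordering of $\mathcal K$, and \eqref{eq:order_K} merely selects the ordering that maximizes the right-hand side.
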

Note that the dimension of each $\Pol_{t-i+1}(\G_{k_{i},d})$ is known, i.e., 
  \begin{equation}\label{eq:dimPolGk}
    \dim(\Pol_{t}(\G_{k,d})) = \sum_{\substack{|\pi| \le t,\\
       \ell(\pi) \le \min\{k,d-k\}}} \mathcal{D}(d,2\pi),
  \end{equation}
  where 
  \begin{equation}\label{eq:dimRep}
\mathcal{D}(d,\pi) = \prod\limits_{1\le i <j \le  \frac{d}{2} }
    \frac{(l_{i}+l_{j})(l_{i}-l_{j})}{(j-i)(d-i-j)} \cdot
    \begin{cases}
      \prod\limits_{1\leq i\leq \frac{d}{2}}
      \frac{2 l_{i}}{d-2 i}
      ,& \text{$d$ odd},\\
      2 
      ,& \text{$d$ even and $\pi_{\lfloor \frac d 2 \rfloor}>0$},\\
      1
      ,& \text{$d$ even and $\pi_{\lfloor \frac d 2 \rfloor}=0$},
    \end{cases}
  \end{equation}
  with $l_{i}:=\frac{d}{2}+\pi_{i}-i$, for $1\leq i\leq  \frac{d}{2} $, cf.~\cite[Formulas
  (24.29) and (24.41)]{Fulton:1991fk} and \cite{Bachoc:2004fk,Bachoc:2002aa}. Thus, \eqref{eq:dim poly bound} is an explicit lower bound on the dimension of $\Pol_{t}(\G_{\K,d})$.

\begin{proof}[of Proposition \ref{the:dimHomt}]
  We will show that the lower bound \eqref{eq:dim poly bound} is valid for any
  ordering of the indices $k_{1},\dots,k_{r}$. In particular it holds for the
  ordering specified in \eqref{eq:order_K}, which maximizes the right hand side
  over all such lower bounds.

  For $t=0$ or $r=1$, the sum in \eqref{eq:dim poly bound} reduces to a single
  term, so that the lower bound indeed holds. For fixed $t\ge 1$, we verify the
  general case by induction over $r$, where we proceed from $r-1$ to $r$ with
  $r\geq 2$.

  Choose $\{f_i\}_{i=1}^m\subset \Pol_{t}(\R^{d\times d}_{\sym})$ and
  $\{g_j\}_{j=1}^n\subset \Pol_{t-1}(\R^{d\times d}_{\sym})$ such that
  $\{f_i|_{\mathcal G_{k_{1},d}}\}_{i=1}^m$ and
  $\{g_j|_{\mathcal G_{\mathcal{K}\setminus \{k_{1}\},d}}\}_{j=1}^n$ are bases
  for the spaces $\Pol_{t}(\mathcal G_{k_{1},d})$ and
  $\Pol_{t-1}(\mathcal G_{\mathcal K\setminus \{k_{1}\},d})$, respectively. We
  infer that any linear combination
\begin{equation*}
h := \sum_{i=1}^{m} \alpha_{i} f_i|_{\mathcal G_{\mathcal
    K,d}} +\sum_{j=1}^{n} \beta_{j} \big(\Tr(\cdot)-k_{1}\big) g_j|_{\mathcal G_{\mathcal
    K,d}}
\end{equation*}
is contained in $\Pol_{t}(\mathcal G_{\mathcal K,d})$. Suppose now that $h$
vanishes on $\mathcal G_{\mathcal K,d}$. In particular, $h$ vanishes on
$\mathcal G_{k_{1},d}$, so that $\alpha_{i}=0$, $i=1,\dots,m$. Vanishing on $\mathcal G_{\mathcal K \setminus \{k_{1}\},d}$ implies 
$\beta_{j}=0$, $j=1,\dots,n$. Hence, the function system
\[
  \big\{{f_i}|_{\mathcal{G}_{\mathcal{K},d}}\big\}_{i=1}^{m} \cup \big\{
  \big(\Tr(\cdot)-k_{1}\big){g_j}|_{\mathcal{G}_{\mathcal{K},d}} \big\}_{j=1}^{n}
\]
is linearly independent in $\Pol_{t}(\mathcal G_{\mathcal K,d})$. By using 
$s-1 = \min\{t,r-1\}$, we infer by the induction hypothesis
\begin{align*}
  \dim(\Pol_{t}(\mathcal G_{\mathcal K,d})) & \ge \dim(\Pol_{t}(\mathcal G_{k_{1},d})) + \dim(\Pol_{t-1}(\mathcal G_{\mathcal K\setminus \{k_{1}\},d})) \\
  & \ge \dim(\Pol_{t}(\mathcal G_{k_{1},d})) +
  \sum_{i=1}^{(s-1)} \dim(\Pol_{(t-1)-i+1}(\mathcal G_{k_{i+1},d}))  \\
  & = \dim(\Pol_{t}(\mathcal G_{k_{1},d})) +
  \sum_{i=2}^{s} \dim(\Pol_{t-i+1}(\mathcal G_{k_{i},d}))  \\
  & = \sum_{i=1}^{s} \dim(\Pol_{t-i+1}(\mathcal G_{{k_i},d})),
\end{align*}
which proves the lower bound \eqref{eq:dim poly bound}.
\end{proof}
In case $\K=\{k,d-k\}$, we can verify that the lower bound is matched by elementary methods:
\begin{proposition}\label{coro:endlich}
Let $1\leq k\leq d-1$ with $ k\neq  \frac{d}{2}$ and $t\geq 1$. Then it holds
 \begin{equation}\label{eq:imply the last statement}
\Pol_{t}(\G_{k,d}\cup \G_{d-k,d})  \cong \Pol_{t}(\G_{k,d})\oplus \Pol_{t-1}(\G_{d-k,d}).
  \end{equation}
\end{proposition}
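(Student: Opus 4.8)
The plan is to compute $\dim\Pol_t(\G_{k,d}\cup\G_{d-k,d})$ through restriction maps. Since $k\neq\tfrac d2$, the sets $\G_{k,d}$ and $\G_{d-k,d}$ are disjoint, so a polynomial function on $\G_{k,d}\cup\G_{d-k,d}$ is determined by its two restrictions, and restriction to $\G_{k,d}$ maps $\Pol_t(\G_{k,d}\cup\G_{d-k,d})$ onto $\Pol_t(\G_{k,d})$ with kernel $N=\{h : h|_{\G_{k,d}}=0\}$. Restriction to $\G_{d-k,d}$ is injective on $N$ (the two pieces being disjoint), with image $R:=\{p|_{\G_{d-k,d}} : p\in\Pol_t(\R^{d\times d}_{\sym}),\ p|_{\G_{k,d}}=0\}$. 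Hence $\Pol_t(\G_{k,d}\cup\G_{d-k,d})\cong\Pol_t(\G_{k,d})\oplus R$, and everything reduces to showing $R=\Pol_{t-1}(\G_{d-k,d})$.

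The inclusion $R\supseteq\Pol_{t-1}(\G_{d-k,d})$ is easy: for $g\in\Pol_{t-1}(\G_{d-k,d})$ with polynomial extension $\tilde g$, the polynomial $(d-2k)^{-1}(\Tr(\cdot)-k)\tilde g$ has degree at most $t$, vanishes on $\G_{k,d}$ where $\Tr\equiv k$, and restricts to $g$ on $\G_{d-k,d}$ where $\Tr\equiv d-k$ (this is where $k\neq\tfrac d2$ enters). For the reverse inclusion, which is the crux, let $p\in\Pol_t(\R^{d\times d}_{\sym})$ vanish on $\G_{k,d}$. Writing $M=aI_d+w$ with $\Tr w=0$, so $\Tr M-k=da-k$, polynomial division in the variable $a$ yields
\[
p(M)=(\Tr M-k)\,q(M)+\bar p\bigl(\pi_W(M)\bigr),\qquad q\in\Pol_{t-1}(\R^{d\times d}_{\sym}),\ \bar p\in\Pol_t(W),
\]
where $W$ denotes the traceless symmetric matrices and $\pi_W(M)=M-\tfrac{\Tr M}{d}I_d$. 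Evaluating on $\G_{k,d}$ shows $\bar p$ vanishes on $G':=\pi_W(\G_{k,d})$, and evaluating on $\G_{d-k,d}$ gives $p|_{\G_{d-k,d}}=(d-2k)\,q|_{\G_{d-k,d}}+(\bar p\circ\pi_W)|_{\G_{d-k,d}}$, whose first summand already has degree at most $t-1$; so it suffices to establish the same for $(\bar p\circ\pi_W)|_{\G_{d-k,d}}$.

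Here I would exploit the central symmetry $M\mapsto I_d-M$, which exchanges $\G_{k,d}$ and $\G_{d-k,d}$ and satisfies $\pi_W(I_d-M)=-\pi_W(M)$, so that $\pi_W(\G_{d-k,d})=-G'$. Decompose $\bar p=\bar p^{+}+\bar p^{-}$ into its even and odd parts as a polynomial on $W$; whichever of $\bar p^{+},\bar p^{-}$ has parity opposite to that of $t$ automatically has degree at most $t-1$ — call it $\bar p_0$. For $y\in-G'$ one has $-y\in G'$, hence $0=\bar p(-y)=\bar p^{+}(y)-\bar p^{-}(y)$, so $\bar p(y)=2\bar p^{+}(y)=2\bar p^{-}(y)=2\bar p_0(y)$ on $-G'$. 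Taking $y=\pi_W(R)$ for $R\in\G_{d-k,d}$ gives $(\bar p\circ\pi_W)|_{\G_{d-k,d}}=2(\bar p_0\circ\pi_W)|_{\G_{d-k,d}}$, which has degree at most $\deg\bar p_0\le t-1$. This yields $R\subseteq\Pol_{t-1}(\G_{d-k,d})$, and with it the claimed isomorphism.

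The main obstacle is precisely this last step: extracting a \emph{degree drop} on $\G_{d-k,d}$ from the vanishing on $\G_{k,d}$. Everything else is routine bookkeeping with restriction maps plus one polynomial division; the degree drop is produced by combining the relation $\pi_W(\G_{d-k,d})=-\pi_W(\G_{k,d})$ with the parity splitting of $\bar p$. (Alternatively, one may invoke Proposition \ref{the:dimHomt} for the bound $\dim\Pol_t(\G_{k,d}\cup\G_{d-k,d})\ge\dim\Pol_t(\G_{k,d})+\dim\Pol_{t-1}(\G_{d-k,d})$, after which only $R\subseteq\Pol_{t-1}(\G_{d-k,d})$ and a dimension comparison are needed; proving $R=\Pol_{t-1}(\G_{d-k,d})$ directly avoids that reference.)
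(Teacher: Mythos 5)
Your proof is correct, and while it follows the same skeleton as the paper's --- analyze the restriction map to $\G_{k,d}$, identify its kernel, and show that the kernel restricted to $\G_{d-k,d}$ is exactly $\Pol_{t-1}(\G_{d-k,d})$ --- the mechanism you use for the crucial degree drop is genuinely different. The paper first invokes the nontrivial external fact (cited from the representation-theoretic work it is trying to give elementary access to) that every $f\in\Pol_t(\G_{k,d}\cup\G_{d-k,d})$ is a linear combination of kernel sections $\trace(X_j\,\cdot)^t$, and then reads off from the binomial expansion that $f+(-1)^{t+1}f(I_d-\cdot)\in\Pol_{t-1}$, which combined with $f|_{\G_{k,d}}\equiv 0$ and the isomorphism $f\mapsto f(I_d-\cdot)$ gives the degree drop on $\G_{d-k,d}$. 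You instead split $M=\frac{\Tr M}{d}I_d+\pi_W(M)$, divide by the linear form $\Tr(\cdot)-k$, and extract the drop from the parity decomposition of the remainder $\bar p$ on the traceless part, using $\pi_W(\G_{d-k,d})=-\pi_W(\G_{k,d})$. Both arguments ultimately exploit the same antipodal symmetry ($\trace(X(I_d-P))=\trace(X)-\trace(XP)$ versus $\pi_W(I_d-M)=-\pi_W(M)$), but yours is self-contained: it needs no spanning-by-kernels input, only polynomial division and an even/odd splitting, which arguably better serves the paper's stated goal of giving elementary proofs. The price is a slightly longer bookkeeping step; the paper's identity \eqref{eq:the same formula reduction} is a cleaner one-liner once the spanning result is granted. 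Your parenthetical alternative (combining $R\subseteq\Pol_{t-1}(\G_{d-k,d})$ with the lower bound of Proposition \ref{the:dimHomt} and a dimension count) is also sound but, as you note, unnecessary given the direct argument.
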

\begin{proof}
%
We consider the restriction mapping
\begin{equation*}
|_{\G_{k,d}} \;:\; \Pol_{t}(\G_{k,d}\cup\G_{d-k,d}) \longrightarrow \Pol_t(\G_{k,d}),\quad f\mapsto f|_{\G_{k,d}}
\end{equation*}
and shall verify that the dimension of its nullspace satisfies 
\begin{equation}\label{eq:tbp}
\Null(|_{\G_{k,d}}) = (\trace(\cdot)-k) \Pol_{t-1}(\G_{k,d}\cup\G_{d-k,d}).
\end{equation}
Since $|_{\G_{k,d}} $ is onto and $(\trace(\cdot)-k) \Pol_{t-1}(\G_{k,d}\cup\G_{d-k,d})$ is equivalent to $\Pol_{t-1}(\G_{d-k,d})$, this would imply \eqref{eq:imply the last statement}.
 
It is obvious that the right-hand-side in \eqref{eq:tbp} is contained in $\Null(|_{\G_{k,d}})$. The latter can also be deduced from the lower bounds \eqref{eq:dim poly bound}. For the reverse set inclusion, let $f\in \Null(|_{\G_{k,d}})$. We must now check that $f|_{\G_{d-k,d}}\in \Pol_{t-1}(\G_{d-k,d})$. 

To proceed let us denote $n:=\dim(\Pol_{t}(\G_{k,d}\cup \G_{d-k,d}))$. According to \cite{Ehler:2014zl}, see also \cite{Bachoc:2010aa}, there are $\{X_j\}_{j=1}^n\subset \G_{k,d}\cup \G_{d-k,d}$ and $\{c_j\}_{j=1}^n\subset \R$ such that 
\begin{equation*}
f(P)= \sum_{j=1}^n  c_j \trace(X_jP)^t|_{\G_{k,d}\cup \G_{d-k,d}},\quad P\in\G_{k,d}\cup \G_{d-k,d}.
\end{equation*}
By applying the binomial formula, we observe that 
\begin{equation}\label{eq:the same formula reduction}
f+(-1)^{t+1} f(I_d-\cdot)\in \Pol_{t-1}(\G_{k,d}\cup \G_{d-k,d}).
\end{equation}
Therefore, the assumption $f|_{\G_{k,d}}\equiv 0$ implies that $f(I-\cdot)|_{\mathcal{G}_{k,d}}\in \Pol_{t-1}(\mathcal{G}_{k,d})$. Since $f\mapsto f(I-\cdot)$ is an isomorphism between $\Pol_{t-1}(\G_{k,d})$ and $\Pol_{t-1}(\G_{d-k,d})$, we derive $f|_{\mathcal{G}_{d-k,d}} \in \Pol_{t-1}(\G_{d-k,d})$. Thus, we have verified \eqref{eq:tbp}, which concludes the proof. 
%
\end{proof}
Proposition \ref{coro:endlich} shows that, for $\mathcal{K}=\{k,d-k\}$, the inequality in Proposition \ref{the:dimHomt} is an equality. It has been proved in \cite{Ehler:2014zl} that there also holds equality in the general situation:
\begin{theorem}[\cite{Ehler:2014zl}]\label{th:eh}
Let $\mathcal{K} =\{k_{i}\}_{i=1}^r\subset\{1,\ldots,d-1\}$ and $t\in\N_0$ be given such that 
\begin{equation*}
\min\{k_{1},d-k_{1}\} \ge \dots \ge \min\{k_{r},d-k_{r}\}.
\end{equation*}
Then it holds 
\begin{equation}
\Pol_{t}(\G_{\mathcal{K},d}) \cong \bigoplus_{i=1}^{s}\Pol_{t-i+1}(\G_{k_{i},d}),\quad s:=\min\{t+1,|\mathcal{K}|\},
  \end{equation}
\end{theorem}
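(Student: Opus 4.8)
The plan is to prove Theorem \ref{th:eh} by induction on the number $r=|\mathcal{K}|$ of ranks, combining Proposition \ref{the:dimHomt} (which already gives the lower bound $\dim \Pol_t(\G_{\K,d}) \ge \sum_{i=1}^s \dim \Pol_{t-i+1}(\G_{k_i,d})$) with a matching \emph{upper} bound on the dimension, so that equality forces the direct sum decomposition. The base cases $r=1$ or $t=0$ are trivial, exactly as in the proof of Proposition \ref{the:dimHomt}. For the inductive step, the key idea is to generalize the argument in Proposition \ref{coro:endlich} from the two-element set $\{k,d-k\}$ to the general case: one removes the rank $k_1$ which maximizes $\min\{k_1,d-k_1\}$, writes $\mathcal{K}' = \mathcal{K}\setminus\{k_1\}$, and studies the restriction map
\begin{equation*}
|_{\G_{k_1,d}} \;:\; \Pol_t(\G_{\K,d}) \longrightarrow \Pol_t(\G_{k_1,d}),\qquad f \mapsto f|_{\G_{k_1,d}}.
\end{equation*}
This map is surjective (every polynomial on $\G_{k_1,d}$ extends to $\R^{d\times d}_{\sym}$, hence restricts to $\G_{\K,d}$), so it suffices to identify its nullspace with $(\trace(\cdot)-k_1)\,\Pol_{t-1}(\G_{\K',d})$; combined with the induction hypothesis applied to $\Pol_{t-1}(\G_{\K',d})$, this yields the upper bound matching \eqref{eq:dim poly bound}.

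The heart of the argument is the nullspace computation. First I would note, as in \eqref{eq:the same formula reduction}, that using the representation $f = \sum_j c_j \trace(X_j P)^t$ (valid by \cite{Ehler:2014zl,Bachoc:2010aa}) together with the substitution $P \mapsto I_d - P$ — which maps $\G_{k,d}$ to $\G_{d-k,d}$ — and the binomial expansion of $\trace(X_j(I_d-P))^t$, one sees that $f + (-1)^{t+1} f(I_d - \cdot)$ lies in $\Pol_{t-1}(\G_{\K,d})$ whenever $f \in \Pol_t(\G_{\K,d})$. Now if $f$ vanishes on $\G_{k_1,d}$, then on the ranks in $\mathcal{K}'$ this identity lets one express $f$ in terms of lower-degree data on the complementary ranks $\{d-k : k \in \mathcal{K}'\} \cup \{k_1\}$; one must then carefully track how "$f$ vanishes on $\G_{k_1,d}$" propagates. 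The cleanest route is to verify that $\Null(|_{\G_{k_1,d}}) = (\trace(\cdot)-k_1)\Pol_{t-1}(\G_{\K,d})$ by a dimension count: the containment $\supseteq$ is immediate, and for $\subseteq$ one checks $\dim \Null(|_{\G_{k_1,d}}) \le \dim \Pol_{t-1}(\G_{\K,d})$. Since restriction to $\G_{k_1,d}$ of an element of $\Null$, together with the degree-reduction identity above, shows any such $f$ is determined modulo $\Pol_{t-1}$ by its values on $\G_{\K',d}$ after the $P \mapsto I_d-P$ flip, and since that flip is an isomorphism $\Pol_{t-1}(\G_{\K',d}) \to \Pol_{t-1}(\G_{\{d-k:k\in\K'\},d})$, one gets $\dim \Null(|_{\G_{k_1,d}}) \le \dim \Pol_{t-1}(\G_{\K',d}) + (\text{correction})$; reconciling this with the induction hypothesis for $\Pol_{t-1}(\G_{\K',d})$ gives the bound. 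Putting the surjectivity, the nullspace identity, and the isomorphism $(\trace(\cdot)-k_1)\Pol_{t-1}(\G_{\K,d}) \cong \Pol_{t-1}(\G_{\K',d})$ together yields $\dim \Pol_t(\G_{\K,d}) = \dim \Pol_t(\G_{k_1,d}) + \dim \Pol_{t-1}(\G_{\K',d})$, and the induction hypothesis applied to $\Pol_{t-1}(\G_{\K',d})$ (whose ordering inherits \eqref{eq:order_K} since $k_1$ was the largest in the $\min\{k,d-k\}$ order, so $s-1 = \min\{t,r-1\}$) closes the induction.

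The step I expect to be the main obstacle is making the nullspace identity \eqref{eq:tbp}-analogue rigorous in the general case, i.e., showing that an $f\in\Pol_t(\G_{\K,d})$ vanishing on $\G_{k_1,d}$ genuinely factors through $(\trace(\cdot)-k_1)$ with the \emph{correct} degree drop on \emph{all} remaining Grassmannians simultaneously. In Proposition \ref{coro:endlich} this worked because $\K=\{k,d-k\}$ is closed under $k\mapsto d-k$, so the flip $P\mapsto I_d-P$ preserved the ambient union; for general $\K$ the flip sends $\G_{\K,d}$ to $\G_{\{d-k:k\in\K\},d}$, a \emph{different} union, and one has to argue that the lower-degree correction term from \eqref{eq:the same formula reduction}, restricted appropriately, still lands in the right space — this is where one either invokes more of the machinery of \cite{Ehler:2014zl} or argues purely by the dimension inequalities of Proposition \ref{the:dimHomt} applied to both $\K$ and its flip. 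A clean alternative that sidesteps the delicate factorization is: establish the upper bound $\dim \Pol_t(\G_{\K,d}) \le \dim \Pol_t(\G_{k_1,d}) + \dim \Pol_{t-1}(\G_{\K',d})$ directly by exhibiting a spanning set (restrictions of $\trace(XP)^t$ with the binomial identity used to trade degree against the factor $\trace(P)-k_1$), then combine with the already-proved lower bound and the induction hypothesis to force equality, and finally deduce that the linearly independent family constructed in the proof of Proposition \ref{the:dimHomt} is in fact a basis, giving the direct sum. I would present the proof in this second form, as it isolates the only nontrivial inequality and reuses Proposition \ref{the:dimHomt} verbatim for the rest.
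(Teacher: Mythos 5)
First, note that the paper itself does not prove Theorem \ref{th:eh}: it cites \cite{Ehler:2014zl} and explicitly remarks that, in contrast to the elementary arguments for Propositions \ref{the:dimHomt} and \ref{coro:endlich}, the general case requires representation-theoretic machinery together with orthogonally invariant reproducing kernels. Your reduction is correctly set up: surjectivity of $|_{\G_{k_1,d}}$, the isomorphism $(\trace(\cdot)-k_1)\Pol_{t-1}(\G_{\mathcal{K},d})\cong\Pol_{t-1}(\G_{\mathcal{K}',d})$, the lower bound of Proposition \ref{the:dimHomt}, and the induction bookkeeping all check out, and they show the theorem is equivalent to the single inequality $\dim\Null(|_{\G_{k_1,d}})\le\dim\Pol_{t-1}(\G_{\mathcal{K}',d})$. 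But that inequality \emph{is} the content of the theorem, and neither of your two proposals for it is an argument. The flip route fails for a concrete reason: if $f\in\Pol_t(\G_{\mathcal{K},d})$ vanishes on $\G_{k_1,d}$, the identity $f+(-1)^{t+1}f(I_d-\cdot)\in\Pol_{t-1}$ yields a degree drop only on the component $\G_{d-k_1,d}$ (where $f(I_d-P)=0$ because $I_d-P\in\G_{k_1,d}$); on any other component $\G_{k_i,d}\subset\G_{\mathcal{K}',d}$ the term $f(I_d-P)$ is evaluated on $\G_{d-k_i,d}$, which need not belong to $\mathcal{K}$ at all, so nothing is gained there. This is precisely why Proposition \ref{coro:endlich} is confined to $\mathcal{K}=\{k,d-k\}$. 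Your ``clean alternative'' conflates $\trace(X)$, the constant appearing in the binomial expansion of $\trace(X(I_d-P))^t$, with $\trace(P)$, the quantity that must be factored out; no identity trading degree against $\trace(P)-k_1$ is exhibited, and I do not see one. Your own summary of what the flip yields, $\dim\Null\le\dim\Pol_{t-1}(\G_{\mathcal{K}',d})+(\text{correction})$, concedes the point: with an unquantified correction term the count cannot force equality.

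Two further cautions. Even if you could show $f|_{\G_{k_i,d}}\in\Pol_{t-1}(\G_{k_i,d})$ for each $i\ge2$ separately, you would still need a \emph{single} polynomial of degree $t-1$ realizing all these restrictions simultaneously, and the passage from there to the factorization $f=(\trace(\cdot)-k_1)g$ with $g\in\Pol_{t-1}(\G_{\mathcal{K},d})$ involves dividing by the locally constant function $\trace(\cdot)-k_1$, which takes the distinct values $k_i-k_1$ on the components of $\G_{\mathcal{K}',d}$; locally constant functions on a union of several Grassmannians are not constants, so this division can raise the degree. Finally, the paper states the nullspace identity $\Null(|_{\G_{k_1,d}})=(\Tr(\cdot)-k_1)\Pol_{t-1}(\G_{\mathcal{K},d})$ as a \emph{consequence} of Theorem \ref{th:eh}, so taking it as the engine of the proof without an independent derivation is circular. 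The gap is genuine, and it is exactly the part that \cite{Ehler:2014zl} resolves by non-elementary means.
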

Compared to our elementary proofs of Propositions \ref{the:dimHomt} and \ref{coro:endlich}, the proof of Theorem \ref{th:eh} presented in \cite{Ehler:2014zl} is much more involved. It makes use of representation theoretic concepts in combination with orthogonally invariant reproducing kernels. 

Note that Theorem \ref{th:eh} reveals that each $f\in \Pol_{t}(\G_{\mathcal{K},d})$ vanishing on $\Pol_{t}(\G_{k_1,d})$ must contain a factor $(\Tr(\cdot) - k_1)\big|_{\G_{\mathcal{K},d}}$, i.e., the restriction mapping $|_{\mathcal{G}_{k_1,d}}$ from $\Pol_{t}(\G_{\mathcal{K},d})$ to $\Pol_{t}(\G_{k_1,d})$, for $t\geq 1$, satisfies
\begin{equation*}
\Null(|_{\mathcal{G}_{k_1,d}}) = (\Tr(\cdot) - k_1) \Pol_{t-1}(\G_{\mathcal{K},d}).
\end{equation*}

Understanding the structure of polynomials on $\G_{\mathcal{K},d}$ is one of the key ingredients to apply the concept of cubatures in the areas of the previous sections. While we now better understand the space $\Pol_{t}(\G_{\mathcal{K},d})$, there is still work to do in order to approximate integrals and functions defined on unions of Grassmannians $\G_{\mathcal{K},d}$, to deal with phase retrieval problems when magnitude is measured in subspaces of varying dimensions, and to derive high dimensional moment reconstructions from marginal moments of varying low dimensions. This shall be addressed in future work.

\begin{acknowledgement}
Thomas Peter was funded by the German Academic Exchange Service (DAAD) through P.R.I.M.E.~57338904. All authors have been supported by the Vienna Science and Technology Fund (WWTF) through project VRG12-009. 
\end{acknowledgement}

\bibliographystyle{amsplain}
\bibliography{../biblio_ehler2}
\end{document}